\documentclass[12pt]{amsart}
\usepackage{amscd,amsmath,amsthm,amssymb,graphics}

\usepackage{amsmath}
\usepackage{amssymb}
\usepackage{amsbsy}
\usepackage{graphicx}
\usepackage{amsfonts}

\usepackage{amscd,amsmath,amsthm,amssymb}
\usepackage{pstcol,pst-plot,pst-3d}%\usepackage[T1]{fontenc}
\usepackage{lmodern,pst-node}%\usepackage{geometry}

\usepackage{pgf,tikz}
\usetikzlibrary{arrows}

\unitlength=0.7cm

               % the font for N,Z,Q,R,C

%
%------------------------------------------------
% Symbols in "Fraktur"
%
\def\frk{\frak}               % font for "Fraktur"

\def\Phi{{\frk n}}
\def\Phi{{\frk N}}
%
%------------------------------------------------

% Small letters in bold
%

\def\opn#1#2{\def#1{\operatorname{#2}}} % to make operators
%------------------------------------------------
% Numerical invariants of rings, ideals, and modules
%
\opn\chara{char} \opn\length{\ell} \opn\pd{pd} \opn\rk{rk}
\opn\projdim{proj\,dim} \opn\injdim{inj\,dim} \opn\rank{rank}
\opn\depth{depth} \opn\grade{grade} \opn\height{height}
\opn\embdim{emb\,dim} \opn\codim{codim}

\opn\Tr{Tr} \opn\bigrank{big\,rank}
\opn\superheight{superheight}\opn\lcm{lcm}
\opn\trdeg{tr\,deg}%\emph{
\opn\reg{reg} \opn\lreg{lreg} \opn\ini{in} \opn\lpd{lpd}
\opn\size{size}\opn\bigsize{bigsize}
\opn\cosize{cosize}\opn\bigcosize{bigcosize}
\opn\sdepth{sdepth}\opn\sreg{sreg}
\opn\link{link}\opn\fdepth{fdepth}
\opn\index{index}
\opn\index{index}
\opn\indeg{indeg}
\opn\N{N}
\opn\SSC{SSC}
\opn\SC{SC}
\opn\conv{conv}
%------------------------------------------------
% Divisors
%
\opn\div{div} \opn\Div{Div} \opn\cl{cl} \opn\Cl{Cl}
%
%------------------------------------------------
% Subsets of the spectrum of a ring
%
\opn\Spec{Spec} \opn\Supp{Supp} \opn\supp{supp} \opn\Sing{Sing}
\opn\Ass{Ass} \opn\Min{Min}\opn\Mon{Mon} \opn\dstab{dstab} \opn\astab{astab}
\opn\Syz{Syz}
\opn\reg{reg}
%
%------------------------------------------------
% Standard operations on ideals and modules
%
\opn\Ann{Ann} \opn\Rad{Rad} \opn\Soc{Soc}
%
%------------------------------------------------
% Linear algebra and homology, endo- and automorphisms
%
\opn\Im{Im} \opn\Ker{Ker} \opn\Coker{Coker} \opn\Am{Am}
\opn\Hom{Hom} \opn\Tor{Tor} \opn\Ext{Ext} \opn\End{End}
\opn\Aut{Aut} \opn\id{id}

\opn\nat{nat}
\opn\pff{pf}%   \pf exists already
\opn\Pf{Pf} \opn\GL{GL} \opn\SL{SL} \opn\mod{mod} \opn\ord{ord}
\opn\Gin{Gin} \opn\Hilb{Hilb}\opn\sort{sort}
\opn\initial{init}
\opn\ende{end}
\opn\height{height}
\opn\type{type}
%
%------------------------------------------------
% Convexity
%
\opn\aff{aff} \opn\con{conv} \opn\relint{relint} \opn\st{st}
\opn\lk{lk} \opn\cn{cn} \opn\core{core} \opn\vol{vol}
\opn\link{link} \opn\star{star}\opn\lex{lex}\opn\Mon{Mon}\opn\Min{Min}
%------------------------------------------------
% Graded rings and Rees algebras
\opn\gr{gr}

%
%------------------------------------------------
% Polynomials and power series
%

\def\pot#1#2{#1[\kern-0.28ex[#2]\kern-0.28ex]}

%
%------------------------------------------------
% Direct and inverse limits
%
\opn\dirlim{\underrightarrow{\lim}}
\opn\inivlim{\underleftarrow{\lim}}
%
%
% Names with a meaning
%

%
%------------------------------------------------
%

\def\Implies{\ifmmode\Longrightarrow \else
        \unskip${}\Longrightarrow{}$\ignorespaces\fi}
\def\implies{\ifmmode\Rightarrow \else
        \unskip${}\Rightarrow{}$\ignorespaces\fi}
\def\iff{\ifmmode\Longleftrightarrow \else
        \unskip${}\Longleftrightarrow{}$\ignorespaces\fi}

\let\:=\colon
\newtheorem{Theorem}{Theorem}[section]
 
 \newtheorem{Corollary}[Theorem]{Corollary}
 \newtheorem{Proposition}[Theorem]{Proposition}
 \newtheorem{Remark}[Theorem]{Remark}
 
 \newtheorem{Example}[Theorem]{Example}
 
 \newtheorem{Definition}[Theorem]{Definition}
 \newtheorem*{Definition*}{Definition}
 
 \newtheorem{Conjecture}{Conjecture}
 \newtheorem*{Conjecture*}{Conjecture}

\let\epsilon\varepsilon
\let\kappa=\varkappa
%
%           We print on A4 paper
%
\textwidth=15cm \textheight=22cm \topmargin=0.5cm
\oddsidemargin=0.5cm \evensidemargin=0.5cm \pagestyle{plain}
%
%           The pf environment of AMSART needs a little help
%
\def\qed{\ifhmode\textqed\fi
      \ifmmode\ifinner\quad\qedsymbol\else\dispqed\fi\fi}
\def\textqed{\unskip\nobreak\penalty50
       \hskip2em\hbox{}\nobreak\hfil\qedsymbol
       \parfillskip=0pt \finalhyphendemerits=0}
\def\dispqed{\rlap{\qquad\qedsymbol}}

%
% ------    END OF GENERAL MACROS    -------
\opn\dis{dis}
\def\pnt{{\raise0.5mm\hbox{\large\bf.}}}

\opn\Lex{Lex}

%-- macro for local cohomology-----------------------------

%-- macro for a complicated condition for the extended
%-- Hochster's formula

\begin{document}

 \title{On  powers of  cover ideals of  graphs}

 \author {Dancheng Lu and Zexin Wang}

 \begin{abstract} For a simple graph $G$, assume that $J(G)$ is the vertex cover ideal of $G$ and $J(G)^{(s)}$ is the $s$-th symbolic power of $J(G)$. We  prove that $\reg(J(C)^{(s)})=\reg(J(C)^s)$ for all $s\geq 1$  and for all odd cycle $C$. For a simplicial complex $\Delta$, we show that  if $I_{\Delta}^{\vee}$ is weakly polymatroidal (not necessarily generated in one degree) then  $\Delta$ is vertex decomposable. Some evidences are provided  that the converse conclusion of the above result also holds true if $\Delta$ is pure. Let $W=G^{\pi}$ be a fully clique-whiskering graph. We prove that $J(W)^s$ is weakly polymatroidal for all $s\geq 1$.
 \end{abstract}

\subjclass[2010]{Primary  13C70, 13H10 Secondary 05E40}
\keywords{vertex cover ideal, regularity, symbolic power, vertex decomposable, odd cycle, weakly polymatroidal, whisker graph}
\address{Dancheng Lu, School of Mathematical Science, Soochow University, P.R.China} \email{ludancheng@suda.edu.cn}
\address{Zexin Wang,  School of Mathematical Science, Soochow University, P.R.China} \email{zexinwang6@outlook.com}

 \maketitle

\section{Introduction}

Let $R=K[x_1,\ldots,x_n]$ be the polynomial ring over a field $K$ and let $G$ be a simple graph on vertex set $[n]:=\{1,2,\ldots,n\}$ with edge set $E(G)$. There are two square-free monomial ideals of $R$ associated to $G$: the {\it edge ideal} $I(G)$ which is generated by all monomial $x_ix_j$ with $\{i,j\}\in E(G)$ and the {\it vertex cover ideal} $J(G)$ generated by monomials $\prod_{i\in F}x_i$, where $F$ is taken over all  minimal vertex covers of $G$. Recall that  a subset $F$ of $V(G)$ is a {\it vertex cover} of $G$ if $F\cap e\neq \emptyset$ for every edge $e$ of $G$ and a vertex cover $F$ of $G$ is {\it minimal} if $F\setminus \{i\}$ is not a vertex cover for each $i\in F$. The vertex cover ideal $J(G)$ is the Alexander dual of the edge ideal $I(G)$, i.e., $$J(G)=I(G)^{\vee}=\bigcap_{\{i,j\}\in E(G)}(x_i,x_j).$$

Let $I$ be a graded ideal of $R$. The $s${\it -th symbolic power} of $I$ is defined by $$I^{(s)}=\bigcap_{\mathfrak{p}\in \mathrm{Min}(I)}I^sR_{\mathfrak{p}}\cap R,$$
where $\mathrm{Min}(I)$ is as usual the set of all minimal prime ideals of $I$.  It follows from  \cite[Proposition 1.4.4]{HH} that for every integer $s\geq 1$,  $$J(G)^{(s)}=\bigcap_{\{i,j\}\in E(G)}(x_i,x_j)^s.$$

The Castelnuovo-Mumford regularity (or simply regularity) is a fundamental invariant in commutative
algebra and algebraic geometry. For a  finitely generated graded module $M$ over the polynomial ring $R$, the {\it regularity} of $M$, denoted by $\reg(M)$,  is the least integer $r\geq 0$ such that for all $i\geq 0$,  the $i$-th syzygy of $M$ is generated by homogeneous elements of degree at most $r+i$. An equivalent definition of the regularity via local cohomology is as follows:$$\reg(M)=\max\{i+j\:\; H^i_{\mathfrak{m}}(M)_j\neq 0, i\geq 0, j\in \mathbb{Z}\}.$$ Here $\mathfrak{m}$ denotes the maximal  ideal $(x_1,\ldots,x_n)$.

For edge ideals of graphs, there have been a lot of research on connections between
the regularity functions $\reg(I(G)^s)$ as well as $\reg(I(G)^{(s)})$ and  the combinatorial properties of $G$, see \cite{BBH} and the references therein. Recently the conjecture that  $\reg(I(G)^{(s)})= \reg(I(G)^s)$  for all $s\geq 1$ and for all graphs $G$ attracted much attention and much progress has been made in this direction. For the details, see  \cite{JK} and the references therein.

 Meanwhile, the study of algebraic properties of (symbolic  and ordinary) powers of vertex cover ideals of graphs is also an active research topic. However, the regularity of  powers of such ideals is harder to compute or deal with. In fact, although S.A. Seyed Fakhari presented in \cite{S} the following remarkable bounds for a large class of graphs $G$, including bipartite graphs, unmixed graphs, claw-free graphs: $$s\mathrm{Deg}(J(G))\leq \mathrm{reg}(J(G)^{(s)})\leq (s-1)\mathrm{Deg}(J(G))+|V(G)|-1,$$ there are not many graphs $G$ for which either $\reg(J(G)^{(s)})$ or $\reg(J(G)^{s})$ is known precisely.  Here, $\mathrm{Deg}(J(G))$ is the maximum size of minimal vertex covers of $G$.   When $G$ is  either a crown graph or a complete multipartite graph,   $\reg(J(G)^{(s)})$  was explicitly given in \cite{KKSS}. On the other side,  if a graded ideal is componentwise linear then its regularity is equal to the maximum degree of its minimal generators. In the literatures \cite{DHN,GHS,M,S,Sel} and \cite{S1},  some classes of graphs for which either $J(G)^s$ or $J(G)^{(s)}$ is componentwise linear are identified. For examples,  it was proved in \cite{S1} that $G$  is a Cohen-Macaulay very well-covered graph if and only if   $J(G)^{(s)}$ has a linear resolution for some (equivalently, for all ) integer $s\geq 2$. In \cite{DHN, GHS, Sel,S} and \cite{S1}, among others, they investigate the question of how to combinatorially modify a graph  to obtain componentwise linearity of the corresponding monomial ideals,  and identify  many graphs $G$ such that $J(G)^{(s)}$ is componentwise linear. In \cite{M}, it was proved that if $G$ is a Cohen-Macaulay cactus graph then $J(G)^s$  has a linear resolution for all $s\geq 1$. For such graphs $G$, the regularity of   either $J(G)^{s}$ or $J(G)^{(s)}$ is known. More recently,  a new upper  bound for $\mathrm{reg}(J(G)^{(s)})$  was given in \cite{N} when $G$ is a non-bipartite graph.

In this paper we investigate further the  properties of (symbolic and ordinary) powers of vertex cover ideals of simple graphs. Our first main result is motivated  by Theorem 5.15 in \cite{DHN}, in which a family of graphs $G$ was constructed such that $\reg(J(G)^{(s)})$ is not eventually linear in $s$. This result particularly shows that the equality  $\reg(J(G)^{(s)})=\reg(J(G)^{s})$ is not true in general.  On the other side, we have known the equality  $\reg(I(G)^{(s)}= \reg(I(G)^s)$  for all $s\geq 1$ holds for many classes of graphs such as unicyclic graphs and chordal graphs and so on. These facts lead us to ask the following question:

\begin{center} {\it For which graphs $G$ has one   $\reg(J(G)^{(s)})=\reg(J(G)^{s})$  for all $s\geq 1$?}
\end{center}

   In this vein we prove the following result.

\vspace{1.5mm}
   \noindent {\bf Theorem \ref{main1}} {\it If $C$ is an odd cycle, then $\reg(J(C)^{(s)})=\reg(J(C)^{s})$ for all $s\geq 1$.}
   \vspace{1.5mm}

  We remark this  is the first non-trivial example where  the above formula holds, due to the well-known fact that  $G$ is a bipartite graph if and only if  $J(G)^{(s)}=J(G)^s$ for some $s\geq 2$ (equivalently for all $s\geq 1$), see \cite[Proposition 1.3]{TT} or  \cite[Theorem 5.1]{HHT}.

\vspace{3mm}
 We next  investigate  relations between weak polymatroidality of a monomial ideal and vertex decomposability of a simplicial complex. It turns out it helps to understand the  behaviors of the powers of cover ideals. Vertex decomposability  was first introduced by \cite{PB} in
the pure case, and extended to the non-pure case in \cite{BW}.
It is defined in terms of the deletion and link. Let $\Delta$ be a simplicial complex on vertex set $V$.  For  $x\in V$, the {\it link} of $x$ in $\Delta$  is the subcomplex $$\mbox{lk}_{\Delta}(x)=\{F\in \Delta\:\; F\cup \{x\}\in \Delta \mbox{ and } x\notin F\}$$ and the {\it deletion}  of $x$ in $\Delta$  is the subcomplex $$\mbox{del}_{\Delta}(x)=\{F\in \Delta\:\;  x\notin F\}.$$
\begin{Definition}\em  A simplicial complex  $\Delta$ is said to be {\it vertex decomposable} if either $\Delta$ is a simplex, or  there exists a vertex $x$ of $\Delta$ such that

(1) $\mbox{lk}_{\Delta}(x)$ and $\mbox{del}_{\Delta}(x)$ are vertex decomposable;

(2) Each facet of $\mbox{lk}_{\Delta}(x)$ is not a facet of $\mbox{del}_{\Delta}(x)$.

A vertex satisfying condition (2) is called a {\it shedding vertex} of $\Delta$.
\end{Definition}

For the recent developments on vertex decomposability,  one may refer to \cite{GLW} and the references therein. The following strict implications is well-known for a  simplicial complex $\Delta$:
$$ \mbox{vertex decomposable }\Longrightarrow  \mbox{  shellable }\Longrightarrow   \mbox{  sequentially Cohen-Macaulay}$$
Moreover, $\Delta$  is shellable if and only if $I_{\Delta}^{\vee}$ has linear quotients; and $\Delta$  is sequentially Cohen-Macaulay if and only if $I_{\Delta}^{\vee}$ is componentwise linear.  One may ask what property $I_{\Delta}^{\vee}$ has when $\Delta$ is  vertex decomposable, or vice versa.

\begin{Definition} \em Following \cite{KH}, we say that a monomial ideal $I$ in $R$  is {\it weakly polymatroidal} if for every pair of elements $u=x_1^{a_1}\cdots x_n^{a_n}$ and $v=x_1^{b_1}\cdots x_n^{b_n}$ of $G(I)$ with $a_1=b_1,\ldots,a_{q-1}=b_{q-1}$ and $a_q<b_q$, (noting that $q<n$) there exists $p>q$
such that $w:=(x_qu)/x_p$ belongs to $G(I)$. Here, $G(I)$ denotes the set of minimal monomial generators of $I$.
 \end{Definition} Different from  the original definition in \cite{KH}, we here do not require $I$ to be generated in one degree. Using the same method as in \cite{KH}, one can prove if $I$ is weakly polymatroidal  in this generalized sense, then $I$ has linear quotients.
Our second main result is as follows:
\vspace{0.5mm}

\noindent {\bf Theorem~\ref{main2}.} {\it If $I_{\Delta}^{\vee}$ is  weakly polymatroidal   then $\Delta$ is vertex decomposable.}
\vspace{0.5mm}

 This particularly shows that being weakly  polymatroidal  is a condition stronger than the property of having linear quotients for a monomial ideal. We also prove that the converse of  Theorem~\ref{main2} holds in some special cases. Recall that a graph $G$ is  {\it unmixed} if every minimal vertex cover of $G$ has the same cardinality, i.e. $J(G)$ is generated in one degree, and that $G$ is vertex decomposable if the independence complex of $G$ is vertex decomposable.

\vspace{0.5mm}
\noindent {\bf Corollary~\ref{3.2}.} {\it  Let $G$ be either a cactus graph or a bipartite graphs or  a chordal graph. Assume further that $G$ is unmixed. Then the following are equivalent:
\begin{enumerate}
  \item $R/I(G)$ is Cohen-Macaulay;
  \item $G$ is vertex decomposable;
  \item $J(G)$ is weakly polymatroidal (in some ordering  of variables);
   \item $J(G)^s$ is weakly polymatroidal (in some ordering  of variables) for all $s\geq 1$.
\end{enumerate} }

 Here, a {\it cactus} graph is a simple graph in which every edge  belongs to at most one cycle. This leads us to conjecture the following.

   \noindent {\bf Conjecture~\ref{3}.} {\it If $G$ is an unmixed graph, then $G$ is vertex decomposable if and only if $J(G)$ is weakly polymatroidal.}
   \vspace{0.5mm}

 The condition that $G$ is  unmixed cannot be dropped in the above conjecture. We  show that  the above conjecture holds true if either $G$ has girth $\geq 5$ or $G$ is very well-covered, see Propositions~\ref{3.5} and \ref{3.6}.
\vspace{0.2mm}

  It is natural to ask  for which unmixed graphs $G$,  $J(G)^{s}$  are  weakly polymatroidal  besides the graphs given in Corollary~\ref{3.2}. Of course, such  graphs (i.e., $R/I(G)$) should be Cohen-Macaulay by Alexander duality.     Let $G$ be a simple graph on vertex set $V(G)$ with edge set $E(G)$. Following \cite{CN}, a {\it clique vertex-partition} of $G$ is a partition $V(G)=W_1\sqcup \cdots \sqcup W_t$ such that the induced graph of $G$ on $W_i$ is a clique (a complete graph) for $i=1,\ldots,t$.   Denote this partition by $\pi=\{W_1, \ldots,W_t\}$. The {\it fully clique-whiskering graph} $G^{\pi}$ of $G$ by $\pi$  is the graph   on  vertex set $V(G)\cup\{y_1,\ldots, y_t\}$ and with edge set $E(G)\cup\{vy_i \:\; v\in W_i, 1\leq i\leq t\}$. When $\pi$ is a trivial partition, i.e., $|W_1|=\cdots=|W_t|=1$, $G^{\pi}$ is also called the {\it whisker graph} of $G$. A tree is Cohen-Macaulay if and only if it is the whisker graph of some other tree. A Cameron-Walker graph is  Cohen-Macaulay if and only if it is a fully clique-whiskering graph $G^{\pi}$ of a bipartite gaph $G$ by some clique vertex-partition $\pi$ of $G$,  see \cite[Theorem 1.3]{HHKO}. Our third main result is as follows:

 \vspace{1mm}
 \noindent {\bf Theorem~\ref{main3}.} {\it If $W=G^{\pi}$ for some graph $G$ and some clique vertex-partition $\pi$, then   $J(W)^s$ is weakly polymatroidal for all $s\geq 1$.}
  \vspace{1mm}

 This result is a complement of Corollary~\ref{3.2}.   As a consequence, we obtain if $W=G^{\pi}$  then $\reg(J(W)^s)=\reg(J(W)^{(s)})=s|V(G)|$ for all $s\geq 1$. The following is another consequence of Theorem~\ref{main3}.
   \vspace{1mm}

   \vspace{1mm}
 \noindent {\bf Corollary~\ref{main3c}.} {\it If $W$  is the whisker graph of  some graph, then  both $J(W)^s$ and $J(W)^{(s)}$ are weakly polymatroidal for all $s\geq 1$.}
  \vspace{1mm}

    In the rest part of this paper  we will keep the notions introduced  in this section unless otherwise stated, and refer to \cite{HH} for some unexplained notions.

\section{Powers of cover ideals of odd cycles}

In this section we will prove that if $C$ is an odd cycle then both $J(C)^{s}$ and $J(C)^{(s)}$ have the same regularity for all $s\geq 1$.

We begin with fixing some notions. Let $M$ be a finitely generated graded $R$-module generated by homogeneous elements $f_1,\ldots,f_r$ minimally with $\deg(f_1)\leq \deg(f_2)\leq \cdots \leq \deg(f_r)$. We denote  by $\mathrm{Deg}(M)$ the number $\deg(f_r)$ and by $\deg(M)$ the number $\deg(f_1)$. It is known that $\reg(M)\geq \mathrm{Deg}(M).$ Let $\mathfrak{m}$ denote the maximal graded ideal $(x_1,x_2,\ldots,x_n)$ of $R$.

\begin{Proposition} \label{F1}Let $I$ be a  homogeneous ideal in $R$ and $t$ a positive integer. Put $J=I\cap \mathfrak{m}^t$.  Then the following holds.
\begin{enumerate}

\item $\mathrm{reg}(\frac{R}{I})\leq \mathrm{reg}(\frac{R}{J})$;

\item  If $t\leq \mathrm{Deg}(I)$, then $\mathrm{reg}(\frac{R}{I})=\mathrm{reg}(\frac{R}{J}).$

  \end{enumerate}
\end{Proposition}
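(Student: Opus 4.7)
The plan is to exploit the short exact sequence
\[
0 \to J \to I \to I/J \to 0
\]
together with two simple observations: (i) as a graded vector space $I\cap\mathfrak{m}^t=\bigoplus_{d\ge t}I_d$, so $I/J\cong\bigoplus_{d<t}I_d$ is Artinian and concentrated in degrees at most $t-1$, giving $\reg(I/J)\le t-1$; and (ii) $J\subseteq\mathfrak{m}^t$ forces every minimal generator of $J$ to have degree $\ge t$, so $\mathrm{Deg}(J)\ge t$ and hence $\reg(J)\ge\mathrm{Deg}(J)\ge t$. Thus $t$ acts as a cutoff: $I/J$ dies strictly below it, while $J$ begins strictly at or above it.

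For part~(1) I apply the standard inequality $\reg(I)\le\max\{\reg(J),\reg(I/J)\}$ to the sequence above. By (i)--(ii), $\reg(I/J)\le t-1<t\le\reg(J)$, so $\reg(I)\le\reg(J)$ and hence $\reg(R/I)\le\reg(R/J)$. For part~(2) I use the complementary estimate $\reg(J)\le\max\{\reg(I),\reg(I/J)+1\}$ for the leftmost term of the same sequence. The hypothesis $t\le\mathrm{Deg}(I)$, together with the elementary bound $\mathrm{Deg}(I)\le\reg(I)$, yields $\reg(I)\ge t\ge\reg(I/J)+1$; therefore $\reg(J)\le\reg(I)$. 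Combined with part~(1), this gives the desired equality.

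I expect essentially no serious obstacle: the entire argument is a formal manipulation of the standard short-exact-sequence inequalities for regularity, driven by the simple degree separation between $J$ (which starts in degree $\ge t$) and $I/J$ (which is supported in degrees $\le t-1$). The role of the assumption $t\le\mathrm{Deg}(I)$ in part~(2) is precisely to force $\reg(I)\ge t$, which is exactly what is needed for the $\reg(I/J)+1$ term to be absorbed in the upper bound on $\reg(J)$.
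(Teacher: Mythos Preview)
Your proof is correct and takes a genuinely different route from the paper's own proof of this proposition. The paper works with the Mayer--Vietoris type sequence
\[
0 \to R/J \to R/I \oplus R/\mathfrak{m}^t \to R/(I+\mathfrak{m}^t) \to 0
\]
and computes the local cohomology modules $H^i_{\mathfrak{m}}$ termwise, comparing the top nonvanishing degrees $a_i$ and $b_i$ directly to establish part~(1); it then uses the standard short-exact-sequence bound on $\reg(R/J)$ for part~(2). You instead use the much simpler sequence $0 \to J \to I \to I/J \to 0$, observe that $I/J = I_{<t}$ has finite length with $\reg(I/J)\le t-1$ while $\reg(J)\ge t$, and invoke only the two standard regularity inequalities for a short exact sequence. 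Your argument is shorter, avoids any explicit local cohomology computation, and in fact is precisely the method the paper adopts in the \emph{next} proposition, where it generalizes the statement to arbitrary finitely generated graded modules via $0 \to M_{\ge j} \to M \to M/M_{\ge j} \to 0$. So you have essentially rediscovered the cleaner proof the authors give for the generalization, applied to the special case at hand.
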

\begin{proof}

(1) Set $b_i=\max\{j\:\;H_{\mathfrak{m}}^i(\frac{R}{I})_j\neq 0\}$ and $a_i=\max\{j\:\; H_{\mathfrak{m}}^i( \frac{R}{J})_j\neq 0\}$. Then $\mathrm{reg}(\frac{R}{I})=\max\{b_i+i\:\; i\geq 0\}$ and $\mathrm{reg}(\frac{R}{J})=\max\{a_i+i\:\; i\geq 0\}$.  Applying the local cohomological functors with respect  to $\mathfrak{m}$ to the short exact sequence  \begin{equation}\label{1}\tag{$\dag$} 0\rightarrow \frac{R}{J}\rightarrow \frac{R}{I}\oplus \frac{R}{\mathfrak{m}^t}\rightarrow \frac{R}{\mathfrak{m}^t+I}\rightarrow 0,\end{equation}
we obtain the  long exact sequence: $0\rightarrow H_{\mathfrak{m}}^0(R/J)\rightarrow  H_{\mathfrak{m}}^0(\frac{R}{I})\oplus H_{\mathfrak{m}}^0(\frac{R}{\mathfrak{m}^t})\rightarrow H_{\mathfrak{m}}^0(\frac{R}{\mathfrak{m}^t+I})\rightarrow \cdots \rightarrow H_{\mathfrak{m}}^i(R/J)\rightarrow  H_{\mathfrak{m}}^i(\frac{R}{I})\oplus H_{\mathfrak{m}}^i(\frac{R}{\mathfrak{m}^t})\rightarrow H_{\mathfrak{m}}^i(\frac{R}{\mathfrak{m}^t+I})\rightarrow \cdots$. From  this sequence  as well as the equality $H_{\mathfrak{m}}^i(\frac{R}{\mathfrak{m}^t+I})=H_{\mathfrak{m}}^i(\frac{R}{\mathfrak{m}^t})=0$ for all $i>0$, we obtain the following facts.

(i) $a_i=b_i$ for all $i\geq 2$.

(ii) The sequence $H_{\mathfrak{m}}^1(\frac{R}{J})\rightarrow H_{\mathfrak{m}}^1(\frac{R}{I})\rightarrow 0$ is exact. From this, we have $H_{\mathfrak{m}}^1(\frac{R}{I})_i=0$ if $H_{\mathfrak{m}}^1(\frac{R}{J})_i=0$. This implies that $a_1\geq b_1$.

(iii) The sequence $0\rightarrow H_{\mathfrak{m}}^0(\frac{R}{J})\rightarrow H_{\mathfrak{m}}^0(\frac{R}{I})\oplus \frac{R}{\mathfrak{m}^t}\rightarrow \frac{R}{I+\mathfrak{m}^t}$ is exact. Thus, for any $i\in \mathbb{Z}$ with $H_{\mathfrak{m}}^0(\frac{R}{J})_i=0$, we have
$\dim_kH_{\mathfrak{m}}^0(\frac{R}{I})_i+\dim_k [\frac{R}{\mathfrak{m}^t}]_i \leq\dim_k [\frac{R}{I+\mathfrak{m}^t}]_i\leq \dim_k [\frac{R}{\mathfrak{m}^t}]_i$ and so $\dim_kH_{\mathfrak{m}}^0(\frac{R}{I})_i=0$.  Hence $a_0\geq b_0$.

  Combining  (i),(ii) with (iii), we obtain that  $\mathrm{reg}(\frac{R}{J})\geq \mathrm{reg}(\frac{R}{I})$.

(2) By using the short exact sequence (\ref{1}),  we also obtain $$\mathrm{reg}(\frac{R}{J})\leq \max\{\mathrm{reg}(\frac{R}{I}), t-1\}=\mathrm{reg}(\frac{R}{I}).$$ This finishes the proof.
\end{proof}

Proposition~\ref{F1} can be extended  to the case of graded modules. Let $M=\bigoplus_{i\in \mathbb{Z}}M_i$ be a finitely generated $\mathbb{Z}$-graded $R$-module. For  $j\in \mathbb{Z}$, we denote by $M_{\geq j}$  the graded submodule $\bigoplus_{i\geq j}M_i$ of $M$. Note that $I\cap \mathfrak{m}^t=I_{\geq t}$ for any graded ideal $I$, we may look upon the following result as a generalization of Proposition~\ref{F1}.

\begin{Proposition}  Let $M$ be a finitely generated $\mathbb{Z}$-graded $R$-module and let $j\in \mathbb{Z}$ such that $M_{\geq j}\neq 0$. Then the following statements hold:
\begin{enumerate}
\item $\reg(M_{\geq j})\geq \reg(M)$;

\item If $j\leq \mathrm{Deg}(M)$, then $\reg(M_{\geq j})=\reg(M)$.
\end{enumerate}

\end{Proposition}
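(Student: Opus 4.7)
The plan is to mimic the proof of Proposition \ref{F1}, replacing the sequence $(\dag)$ by the canonical short exact sequence
\[
0 \longrightarrow M_{\geq j} \longrightarrow M \longrightarrow N \longrightarrow 0, \qquad N := M/M_{\geq j}.
\]
Since $M$ is finitely generated over the positively graded ring $R$, $M$ is bounded below in degree, so $N$ is a finite-dimensional $K$-vector space concentrated in degrees $< j$. Consequently $H^0_{\mm}(N) = N$ and $H^i_{\mm}(N) = 0$ for $i > 0$, and the associated long exact sequence of local cohomology collapses to isomorphisms $H^i_{\mm}(M_{\geq j}) \cong H^i_{\mm}(M)$ for $i \geq 2$, a surjection $H^1_{\mm}(M_{\geq j}) \twoheadrightarrow H^1_{\mm}(M)$, and the identification $H^0_{\mm}(M_{\geq j}) = H^0_{\mm}(M)_{\geq j}$.

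For part (1), set $a_i = \max\{n : H^i_{\mm}(M_{\geq j})_n \neq 0\}$ and let $b_i$ be the analogous quantity for $M$. The three observations above immediately yield $a_i + i \geq b_i + i$ for every $i \geq 1$, so the only situation requiring extra work is when $\reg(M) = b_0$. If $b_0 \geq j$, the third identification gives $a_0 = b_0$ and we are done. If instead $b_0 < j$, the local-cohomology comparison collapses ($H^0_{\mm}(M_{\geq j}) = 0$) and I would switch to the degree side: a nonzero homogeneous element of $M_{\geq j}$ of least degree cannot lie in $\mm M_{\geq j}$, since any product in $\mm M_{\geq j}$ has strictly larger degree. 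Hence $M_{\geq j}$ acquires a minimal generator of some degree $\geq j$, giving $\reg(M_{\geq j}) \geq \mathrm{Deg}(M_{\geq j}) \geq j > b_0 = \reg(M)$.

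For part (2), apply the standard regularity bound for the displayed short exact sequence to obtain
\[
\reg(M_{\geq j}) \leq \max\{\reg(M),\; \reg(N) + 1\}.
\]
Since $N$ is concentrated in degrees $< j$ we have $\reg(N) \leq j - 1$, and the hypothesis $j \leq \mathrm{Deg}(M) \leq \reg(M)$ dominates the second term. Combined with part (1), this forces the equality $\reg(M_{\geq j}) = \reg(M)$. The one potentially delicate point is the subcase $b_0 < j$ of part (1): there the local-cohomology comparison is too weak on its own, and the argument has to pivot to the purely degree-theoretic inequality $\reg(\cdot) \geq \mathrm{Deg}(\cdot)$, combined with the simple observation that truncation at degree $j$ necessarily introduces a fresh minimal generator in degree at least $j$.
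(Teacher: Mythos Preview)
Your proof is correct and rests on the same short exact sequence $0\to M_{\geq j}\to M\to M/M_{\geq j}\to 0$ as the paper; part (2) is identical to the paper's argument. For part (1), however, the paper is more economical: rather than tracking each $a_i,b_i$ through the long exact sequence and splitting into the cases $b_0\geq j$ and $b_0<j$, it simply invokes the middle-term inequality $\reg(M)\leq\max\{\reg(M_{\geq j}),\,\reg(M/M_{\geq j})\}$ and observes that $\reg(M_{\geq j})\geq j>\reg(M/M_{\geq j})$ (the first because $M_{\geq j}\neq 0$ forces a minimal generator in degree $\geq j$, the second because $M/M_{\geq j}$ has finite length concentrated in degrees $<j$). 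Your case analysis recovers exactly these two facts, so the content is the same---the paper just packages it in one line instead of unpacking the local cohomology.
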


\begin{proof} (1)  Consider the following short exact sequence: \begin{equation}\label{2} \tag{$\ddag$} 0\rightarrow M_{\geq j}\rightarrow M\rightarrow M/M_{\geq j}\rightarrow 0.\end{equation}
It is known that $\reg(N)=\max\{i\:\; N_i\neq 0\}$ whenever $N$ is a graded $R$-module of finite length. From this, it follows that  $\reg(M_{\geq j})\geq j> \reg(M/M_{\geq j})$. Hence $\reg(M)\leq \max\{\reg(M_{\geq j}), \reg(M/M_{\geq j})\}=\reg(M_{\geq j})$.

(2) Using the sequence (\ref{2}) again, we  obtain    $$\reg(M_{\geq j})\leq \max\{\reg(M), \reg(M/M_{\geq j})+1\}.$$  Note that $\reg(M)\geq \mathrm{Deg}(M)\geq j\geq \reg(M/M_{\geq j})+1$, the result follows.\end{proof}

Let $G$  be a simple graph on vertex set $[n]$ and $H$  a subgraph of $G$. The neighborhood of $H$ is defined by
$$N_G(H)=\{i\in V(G)\:\; i \mbox{ is adjacent to some vertex of } H \}.$$
 By \cite[Proposition 5.3]{HHT},  if $N_G(C)=[n]$ for every odd cycle $C$, then the symbolic Rees algebra $$\mathcal{R}_s(J(G))=\bigoplus_{k\geq 0} J(G)^{(k)}t^k$$ of $J(G)$  is generated by the monomial $x_1\cdots x_nt^2$ together with  the monomials  $t\prod_{i\in F}x_i$ such that $F$  is a minimal vertex cover of $G$. Thus,  the following result is a direct consequence of \cite[Proposition 5.3]{HHT}.
\begin{Proposition} \label{herzog} Let $G$ be  a simple graph on vertex set $[n]$ such that $N_G(C)=[n]$ for every odd cycle of $G$. Then $$J(G)^{(s)}=J(G)^s+\sum_{i=1}^{\lfloor\frac{s}{2}\rfloor}(x_1x_2\cdots x_{n})^iJ(G)^{s-2i}.$$
Here, $\lfloor\frac{s}{2}\rfloor$ denotes the largest integer at most $\frac{s}{2}$.
\end{Proposition}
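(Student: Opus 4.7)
The plan is to deduce the stated equality directly from the generating set of the symbolic Rees algebra $\mathcal{R}_s(J(G))$ described in \cite[Proposition 5.3]{HHT}. Under the hypothesis that $N_G(C)=[n]$ for every odd cycle $C$, that result tells us that the bigraded $K$-algebra $\mathcal{R}_s(J(G))=\bigoplus_{k\geq 0}J(G)^{(k)}t^k$ is generated, over $K[x_1,\ldots,x_n]$, by the single monomial $x_1\cdots x_nt^2$ together with the monomials $t\prod_{i\in F}x_i$ where $F$ runs over the minimal vertex covers of $G$.

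For the inclusion $J(G)^{(s)}\subseteq J(G)^s+\sum_{i=1}^{\lfloor s/2\rfloor}(x_1\cdots x_n)^iJ(G)^{s-2i}$, I would pick a monomial $u\in J(G)^{(s)}$ and look at $ut^s\in \mathcal{R}_s(J(G))$. By the description of the generators, $ut^s$ can be written as an $R$-linear combination of products of the generators with total $t$-degree exactly $s$. Each such product has the form
$$
(x_1\cdots x_nt^2)^{i}\cdot \prod_{j=1}^{s-2i}\Bigl(t\prod_{v\in F_j}x_v\Bigr)
$$
for some $0\leq i\leq \lfloor s/2\rfloor$ and minimal vertex covers $F_1,\ldots,F_{s-2i}$; extracting the coefficient of $t^s$ places this contribution in $(x_1\cdots x_n)^iJ(G)^{s-2i}$ (with the convention that the $i=0$ term is in $J(G)^s$). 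Summing over the terms gives $u$ in the right-hand side.

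The reverse inclusion follows from the multiplicativity $J(G)^{(a)}\cdot J(G)^{(b)}\subseteq J(G)^{(a+b)}$, together with the observation that $x_1\cdots x_n\in J(G)^{(2)}$: indeed, for every edge $\{i,j\}\in E(G)$ one has $x_ix_j\in (x_i,x_j)^2$, hence $x_1\cdots x_n\in (x_i,x_j)^2$, and intersecting over all edges shows $x_1\cdots x_n\in J(G)^{(2)}$. Thus $(x_1\cdots x_n)^iJ(G)^{s-2i}\subseteq J(G)^{(2i)}J(G)^{(s-2i)}\subseteq J(G)^{(s)}$, and $J(G)^s\subseteq J(G)^{(s)}$ is automatic.

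The only nontrivial ingredient is the input from \cite[Proposition 5.3]{HHT}; granted that, both inclusions are essentially bookkeeping about $t$-degrees. The main point I would double-check is that the forward-direction argument really only needs to consider monomial combinations (which is fine because $\mathcal{R}_s(J(G))$ is a monomial subalgebra of $R[t]$), so there is no obstacle in passing from the algebra description to the ideal identity.
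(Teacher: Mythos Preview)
Your proposal is correct and follows exactly the approach the paper takes: the paper simply states that the proposition is ``a direct consequence of \cite[Proposition 5.3]{HHT}'' (the description of the generators of the symbolic Rees algebra), and you have spelled out the bookkeeping of reading off the degree-$s$ piece and checking the reverse inclusion that the paper leaves implicit. There is no substantive difference in method.
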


Let $C$ be an odd cycle of length $n=2r+1$. It is not difficult to see that $C$ is not unmixed if $n\geq 9$. More precisely, we have $\deg(J(C))=r+1$ and $$\mathrm{Deg}(J(C))=\left\{
                                                                                                     \begin{array}{ll}
                                                                                                       4t+2, & \hbox{$n=6t+3$;} \\
                                                                                                       4t+3, & \hbox{$n=6t+5$;} \\
                                                                                                       4t+4, & \hbox{$n=6t+7$.}
                                                                                                     \end{array}
                                                                                                   \right.
$$ for all $t\geq 0$.

We now come to the main result of this section.

\begin{Theorem} \label{main1} Let $J$ be the vertex cover ideal of an  odd cycle of $C$ on  vertex set $[n]$ with $n=2r+1$. Then $\reg(J^{(s)})=\reg(J^s)$ for all $s\geq 1$.

\end{Theorem}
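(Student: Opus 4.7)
The plan is to show that $J^{(s)}$ and $J^s$ coincide in all degrees at least $s\mathrm{Deg}(J)$ and then invoke the module-theoretic generalization of Proposition~\ref{F1} to conclude. Since the odd cycle $C$ satisfies $N_C(C)=[n]$, Proposition~\ref{herzog} gives
\[
J^{(s)} \;=\; J^s \;+\; \sum_{i=1}^{\lfloor s/2\rfloor}(x_1\cdots x_n)^i J^{s-2i}.
\]

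The technical heart is the containment $(x_1\cdots x_n)\mathfrak{m}\subseteq J^2$. To establish it I would exhibit, for each $j\in[n]$, an explicit factorization $(x_1\cdots x_n)\cdot x_j=v_1 v_2$ with $v_1,v_2$ minimum vertex covers of $C$. By cyclic symmetry one may take $j=1$; then $v_1=x_1x_2x_4x_6\cdots x_{2r}$ and $v_2=x_1x_3x_5\cdots x_{2r+1}$ both correspond to minimum vertex covers of $C$ of size $r+1$, and their product is $x_1^2x_2x_3\cdots x_{2r+1}$. Iterating such factorizations yields $(x_1\cdots x_n)^i\mathfrak{m}^i\subseteq J^{2i}$ for every $i\geq 1$. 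Along the way I would verify $\mathrm{Deg}(J^{(s)})=\mathrm{Deg}(J^s)=s\mathrm{Deg}(J)$: every generator of $(x_1\cdots x_n)^iJ^{s-2i}$ with $i\geq 1$ has degree at most $s\mathrm{Deg}(J)-i(2\mathrm{Deg}(J)-n)<s\mathrm{Deg}(J)$, because $2\mathrm{Deg}(J)\geq n+1$ (since $\mathrm{Deg}(J)\geq\deg(J)=(n+1)/2$); on the other hand, the value $s\mathrm{Deg}(J)$ is realised by $(\prod_{k\in F}x_k)^s$ for a minimum vertex cover $F$ of $C$ of maximum size, whose support omits the complementary maximal independent set and hence cannot be divisible by any $(x_1\cdots x_n)^im$ with $i\geq 1$.

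The crux is then to show that every monomial $u\in J^{(s)}$ with $\deg u\geq s\mathrm{Deg}(J)$ lies in $J^s$. Writing $u=(x_1\cdots x_n)^im$ with $m\in J^{s-2i}$ from the decomposition, the non-trivial case is $i\geq 1$, where the bound $2\mathrm{Deg}(J)\geq n+1$ forces
\[
\deg m \;=\; \deg u-ni \;\geq\; (s-2i)\mathrm{Deg}(J)+i \;=\; \mathrm{Deg}(J^{s-2i})+i,
\]
so $m=gw$ with $g$ a minimal generator of $J^{s-2i}$ and $w$ a monomial of degree at least $i$. The factorization lemma then yields $u=g\cdot(x_1\cdots x_n)^iw\in J^{s-2i}\cdot J^{2i}=J^s$. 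Hence $J^{(s)}_{\geq s\mathrm{Deg}(J)}=J^s_{\geq s\mathrm{Deg}(J)}$, and applying the module-theoretic generalization of Proposition~\ref{F1} with $j=s\mathrm{Deg}(J)=\mathrm{Deg}(J^{(s)})=\mathrm{Deg}(J^s)$ delivers
\[
\reg(J^{(s)}) \;=\; \reg\bigl(J^{(s)}_{\geq j}\bigr) \;=\; \reg\bigl(J^s_{\geq j}\bigr) \;=\; \reg(J^s).
\]

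The main obstacle I expect is the combinatorial factorization lemma $(x_1\cdots x_n)\mathfrak{m}\subseteq J^2$; once it is secured, the remainder is a careful but essentially formal degree count resting on the elementary bound $2\mathrm{Deg}(J)\geq n+1$.
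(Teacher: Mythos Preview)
Your proposal is correct and follows essentially the same approach as the paper: both proofs use Proposition~\ref{herzog} for the decomposition of $J^{(s)}$, the key lemma $(x_1\cdots x_n)x_j\in J^2$, the degree count showing the residual monomial has degree $\geq i$ (via $2\mathrm{Deg}(J)\geq n+1$), and then Proposition~\ref{F1} (or its module version) applied at the threshold $s\mathrm{Deg}(J)$. Your write-up is in fact more explicit than the paper's in justifying $(x_1\cdots x_n)\mathfrak{m}\subseteq J^2$ and in verifying $\mathrm{Deg}(J^{(s)})=s\mathrm{Deg}(J)$, but the architecture is the same.
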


\begin{proof} Put $t=\mathrm{Deg}(J)$. Then $t\geq r+1$ and so $\mathrm{Deg}(J^{(s)})=\mathrm{Deg}(J^s)=st$ for all $s\geq 1$ by Proposition~\ref{herzog}. Fix $s\geq 1$. We claim that $$J^{(s)}\cap \mathfrak{m}^{st}=J^{s}\cap \mathfrak{m}^{st}.$$   In light of  Proposition~\ref{herzog}, it suffices to show that $$(x_1x_2\cdots x_{2r+1})^iJ^{s-2i}\cap \mathfrak{m}^{st} \subseteq J^s$$ for all $i=1,\ldots, \lfloor\frac{s}{2}\rfloor$.
Fix $i\geq 1$ and let $\alpha$ be a  monomial  in  $(x_1x_2\cdots x_{2r+1})^iJ^{s-2i}\cap \mathfrak{m}^{st}$. Then we may write  $\alpha=(x_1x_2\cdots x_{2r+1})^i\alpha_1\cdots \alpha_{s-2i}\mathbf{u}$, where $\alpha_i$ is a minimal monomial generator of $J$ for each $i$ and $\mathbf{u}$ is some monomial. Since $\deg(\alpha)\geq st$ and $\deg(\alpha_i)\leq t$, it follows that $\deg(\mathbf{u})\geq i$. This, together with the fact  $(x_1x_2\cdots x_{2r+1})x_i\in J^2$ for any $i\in [2r+1]$, implies $\alpha\in J^{s}$.  Thus, the claim is proved.

In view of Proposition~\ref{F1}, the result is immediate from the above claim.
\end{proof}

\begin{Remark} \em Proposition \ref{F1} is also useful in the study of powers of edge ideals.  For example,  \cite[Theorem 3.5]{JK} is a direct consequence of \cite[Corollary 3.3]{JK} in view of Proposition \ref{F1}.
\end{Remark}

\section{Vertex decomposability via weak polymatroidality}

Let $\Delta$ be a simplicial complex on $[n]$. In this section we prove that if  $I_{\Delta}^{\vee}$ is weakly polymatroidal then $\Delta$ is vertex decomposable. This particularly shows that for a monomial ideal, the property of being weakly polymatroidal  is stronger than the property of having linear quotients. The converse implication of the above result is also discussed.

We first give an observation on the property of a shedding vertex. Let $\Delta$ be a simplicial complex on vertex set $[n]$ with facets $ F_1,F_2, \ldots,F_r$. Assume that $k$ is a shedding vertex of $\Delta$ and assume  without loss of generality that $k\in F_i$ for $i=1,\ldots,j$ and $k\notin F_i$ for $i=j+1,\ldots,r$. Then $\mbox{lk}_{\Delta}(k)=\langle F_1\setminus \{k\}, \ldots, F_j\setminus \{k\}\rangle$ and  $\mbox{del}_{\Delta}(k)=\langle F_1\setminus \{k\}, \ldots, F_j\setminus \{k\}, F_{j+1}, \ldots, F_r\rangle=\langle F_{j+1}, \ldots, F_r\rangle$. This observation is useful in the following proofs. We also need some more notation. Let $I$ be a monomial ideal.
As usual, $G(I)$ denotes the set of minimal monomial generators of $I$ and $\mathrm{supp}(I)$ is the set $\cup_{u\in G(I)}\mathrm{supp}(u)$, where for a monomial $u$,   $\mathrm{supp}(u)$ denotes the set $\{i\in [n]\:\; x_i| u\}$. For a subset $A\subseteq [n]$, $x_{\overline{A}}$ denotes the monomial $\prod_{i\in [n]\setminus A}x_i$.

\begin{Theorem} \label{main2} Let $\Delta$ be a simplicial complex on $[n]$ and suppose  that $I_{\Delta}^{\vee}$ is weakly polymatroidal in some ordering of   variables. Then  $\Delta$ is vertex decomposable.
\end{Theorem}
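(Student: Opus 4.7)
The plan is to induct on the number of minimal generators $|G(I)|$ of $I=I_{\Delta}^{\vee}$. The base case $|G(I)|\leq 1$ is immediate, since then $\Delta$ is a simplex and is vertex decomposable by definition. For the inductive step, assume $|G(I)|\geq 2$ and let $k$ be the smallest index such that some $u\in G(I)$ has $x_k\nmid u$ while some $v\in G(I)$ has $x_k\mid v$. By the minimality of $k$, every pair of generators of $I$ share the same exponents on $x_1,\dots,x_{k-1}$. I take $k$ as the candidate shedding vertex and split $G(I)=G_1\sqcup G_2$, where $G_1=\{u\in G(I):x_k\nmid u\}$ and $G_2=\{u\in G(I):x_k\mid u\}$; via Alexander duality these correspond respectively to the facets of $\Delta$ containing $k$ and to the facets of $\Delta$ not containing $k$.

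To verify the shedding condition at $k$, fix a facet $F$ of $\Delta$ with $k\in F$, so that $u_F=\prod_{i\notin F}x_i\in G_1$. Pair $u_F$ with any $v\in G_2$: their exponent vectors agree on positions $<k$ and jump from $0$ to $1$ at position $k$. The weakly polymatroidal hypothesis then supplies an index $p>k$ with $w=(x_ku_F)/x_p\in G(I)$. The monomial $w$ is squarefree and corresponds to the facet $F''=(F\setminus\{k\})\cup\{p\}$ of $\Delta$; since $k\notin F''$ and $F\setminus\{k\}\subsetneq F''$, the face $F\setminus\{k\}$ fails to be a facet of $\mbox{del}_{\Delta}(k)$, and $k$ is a shedding vertex.

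Next I would verify that $\mbox{lk}_{\Delta}(k)$ and $\mbox{del}_{\Delta}(k)$ are vertex decomposable by appealing to the inductive hypothesis. A direct computation from their facet descriptions shows that $I_{\mbox{lk}_{\Delta}(k)}^{\vee}$ has minimal generating set $G_1$ (viewed as an ideal of $K[x_i:i\neq k]$), and $I_{\mbox{del}_{\Delta}(k)}^{\vee}$ has minimal generating set $G_2$ (as an ideal of $R$); the second identification uses the shedding condition just proved to rule out extra facets of the deletion. Both sets have strictly fewer elements than $G(I)$. Inheritance of weak polymatroidality is then a routine check: for the link, any first-disagreement index $q$ between two elements of $G_1$ must satisfy $q\neq k$, and the exchange index $p>q$ cannot equal $k$ (else $(x_qu)/x_k$ fails to be a monomial); for the deletion, the minimality of $k$ forces $q>k$ for any two elements of $G_2$, so $p>q>k$ and $x_k$ is never removed.

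The main obstacle I anticipate is precisely this bookkeeping: one must pin down the minimal generators of the Alexander duals of $\mbox{lk}_{\Delta}(k)$ and $\mbox{del}_{\Delta}(k)$ and then track the weakly polymatroidal exchanges carefully enough to see that they restrict to each sub-ideal. The essential input is the choice of $k$ as the very first position at which the generators of $I$ disagree, which simultaneously yields the shedding exchange and prevents accidental creation or destruction of $x_k$ in subsequent exchanges within $G_1$ and $G_2$. Once these points are verified, combining the shedding condition with the vertex decomposability of the link and the deletion closes the induction and proves the theorem.
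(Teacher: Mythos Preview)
Your proposal is correct and follows essentially the same route as the paper's proof: the same induction on the number of facets (equivalently $|G(I_\Delta^\vee)|$), the same choice of $k$ as the first index on which generators of $I_\Delta^\vee$ disagree, the same exchange argument to verify the shedding condition, and the same decomposition $G(I_\Delta^\vee)=G_1\sqcup G_2$ to identify the duals of the link and deletion and to show they inherit weak polymatroidality. The only cosmetic differences are that the paper views both $\mathrm{lk}_\Delta(k)$ and $\mathrm{del}_\Delta(k)$ as complexes on $[n]\setminus\{k\}$ (so the deletion dual is generated by $\{u/x_k:u\in G_2\}$ rather than $G_2$), and that your ``$q\neq k$'' for the link case can be sharpened to $q>k$ directly from your own minimality observation, which makes the claim that $p\neq k$ automatic rather than requiring the side remark about $(x_qu)/x_k$ failing to be a monomial.
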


\begin{proof} Since the vertex decomposability of a simplicial complex is independent of the ordering of varaibles, we may assume $I_{\Delta}^{\vee}$ is weakly polymatroidal itself. Let $\mathcal{F}(\Delta)$ denote the set of facets of $\Delta$. In the following, we will  use the induction on $|\mathcal{F}(\Delta)|$, the number of facets of $\Delta$. If $|\mathcal{F}(\Delta)|=1$,  then $I_{\Delta}^{\vee}$ is generated by a single monomial and so it is weakly polymatroidal  automatically. Now assume $|\mathcal{F}(\Delta)|\geq 2$. In this case we let $k=\min\{i\:\; i\in F_1\cup\cdots\cup F_r, i\notin F_1\cap\cdots\cap F_r\}$.

We first show that $k$ is a shedding vertex of $\Delta$.    For this, let $F,G$ be facets of $\Delta$ such that $k\in F$ and $k\notin G$, respectively.
Note that we may write $x_{\overline{F}}=x_{k+1}^{a_{k+1}}\cdots x_n^{a_n}$ and $x_{\overline{G}}=x_kx_{k+1}^{b_{k+1}}\cdots x_n^{b_n}$, where $a_i,b_i\in \{0,1\}$ for all $i$ and they are minimal generators of $I_{\Delta}^{\vee}$.   This implies there exists $\ell>k$ such that  $\mathbf{u}:=x_kx_{\overline{F}}/ x_{\ell}$ is also a minimal generators of $I_{\Delta}^{\vee}$ and so there exists a facet $H$ of $\Delta$ such that  $\mathbf{u}=x_{\overline{H}}$.  From this it follows that $k\notin H$ and $F\setminus \{k\} \subseteq H$. Since $H$ is  a facet of $\Delta$, we have $F\setminus \{k\}\subsetneq H$. This actually shows that none of facets of $\mathrm{lk}_{\Delta}(k)$ is  a facet of $\mathrm{del}_{\Delta}(k)$ and it follows that $k$ is a shedding vertex of $\Delta$.

Set $\Delta_1:=\mathrm{lk}_{\Delta}(k)$
and $\Delta_2:=\mathrm{del}_{\Delta}(k)$. Next we show $I_{\Delta_1}^{\vee}$ and $I_{\Delta_2}^{\vee}$ are both weakly polymatroidal.  We will look $\Delta_1$ and $\Delta_2$ upon as simplicial complexes on $V:=[n]\setminus \{k\}$.
 Then $I_{\Delta_i}^{\vee}=(x_{V\setminus F}\:\; F\in \mathcal{F}(\Delta_i))$.   Note that
 \begin{equation}\label{D}\tag{$\clubsuit$}
 G(I_{\Delta}^{\vee})=G(I_{\Delta_1}^{\vee})\bigsqcup \{x_ku\:\; u\in G(I_{\Delta_2}^{\vee})\}.
 \end{equation}
 Moreover, $\mathrm{supp}(I_{\Delta_i}^{\vee})\subseteq \{k+1,\ldots,n\}$ for $i=1,2$.

 Let $\mathbf{u}=x_{k+1}^{a_{k+1}}\cdots x_n^{a_n}$ and $\mathbf{v}=x_{k+1}^{b_{k+1}}\cdots x_n^{b_n}$ be distinct elements in $G(I_{\Delta_1}^{\vee})$ such that $a_{k+1}=b_{k+1},\ldots,a_{k+i-1}=b_{k+i-1}$ and  $a_{k+i}<b_{k+i}$. Then, since $I_{\Delta}^{\vee}$ is weakly polymatroidal, there exists $j>i\geq 1$ such that $\mathbf{w}:=x_{k+i}\mathbf{u}/x_{k+j}\in G(I_{\Delta}^{\vee})$. This, together with the decomposition in (\ref{D}), implies $\mathbf{w}\in G(I_{\Delta_1}^{\vee})$. Thus, we have proven that $I_{\Delta_1}^{\vee}$ is weakly polymatroidal. Similarly, we can prove $I_{\Delta_2}^{\vee}$ is weakly polymatroidal.
 By induction hypothesis, we have $\Delta_i$ is vertex decomposable for $i=1,2$ and so $\Delta$ is vertex decomposable, as required.
\end{proof}

Let $G$ be a simple graph. A subset $A$ of $V(G)$ is an {\it independent  set} of $G$ if for any $i,j\in A$, the pair $\{i,j\}$ is not an edge of $G$. The {\it independence complex} of $G$ is the collection of all independent  sets of $G$. We call $G$ to be vertex decomposable if its independence complex is vertex decomposable. If we let $\Delta$ be the  independence complex of $G$, then the Stanley-Reisner ideal $I_{\Delta}$ is the edge ideal $I(G)$ and its Alexander dual  $I_{\Delta}^{\vee}$ is the vertex cover ideal $J(G)$.

The converse of Theorem~\ref{main2} is true in some cases as shown by the following corollary.

\begin{Corollary} \label{3.2} Let $G$ be either a cactus graph or a bipartite graphs or  a chordal graph. Assume further that $G$ is unmixed. Then the following are equivalent:

\begin{enumerate}
  \item $R/I(G)$ is Cohen-Macaulay;
  \item $G$ is vertex decomposable;
  \item $J(G)$ is weakly polymatroidal (in some ordering  of variables);
   \item $J(G)^s$ is weakly polymatroidal (in some ordering  of variables) for all $s\geq 1$.
\end{enumerate}

 In particular, if $\Delta$ is the independence complex of either a  cactus graph or a  bipartite graph or a chordal graph, then $I_{\Delta}^{\vee}$ is weakly polymatroidal in some ordering of variables if and only if $\Delta$ is vertex decomposable and pure.
\end{Corollary}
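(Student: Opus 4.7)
The plan is to run the cycle $(4)\Rightarrow (3)\Rightarrow (2)\Rightarrow (1)\Rightarrow (4)$. Three of the four steps are short; the real content is crammed into $(1)\Rightarrow (4)$, which I would handle by invoking a separate structural classification of unmixed Cohen--Macaulay graphs in each of the three families.

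The easy direction: $(4)\Rightarrow (3)$ is immediate by taking $s=1$. For $(3)\Rightarrow (2)$, let $\Delta$ be the independence complex of $G$, so that $I_\Delta=I(G)$ and $I_\Delta^\vee=J(G)$; Theorem~\ref{main2} then yields that $\Delta$ is vertex decomposable, i.e.\ $G$ is vertex decomposable. For $(2)\Rightarrow (1)$, I would quote the chain \emph{vertex decomposable $\Rightarrow$ shellable $\Rightarrow$ sequentially Cohen--Macaulay} recalled in Section~1, and use that $G$ unmixed means $\Delta$ is pure, so pure sequentially Cohen--Macaulay collapses to Cohen--Macaulay.

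For the substantive implication $(1)\Rightarrow (4)$ I would split on the three families. When $G$ is a Cohen--Macaulay chordal graph, I would cite the Herzog--Hibi--Zheng classification: every unmixed chordal Cohen--Macaulay graph has the form $H^\pi$ for some chordal graph $H$ and some clique vertex-partition $\pi$ of $H$. Then Theorem~\ref{main3} applies directly and gives that $J(G)^s$ is weakly polymatroidal for all $s\geq 1$. When $G$ is a Cohen--Macaulay cactus graph I would use Mohammadi's structural description \cite[Theorem 4.3]{M} in the corrected form presented in the appendix of this paper, which again yields (4). When $G$ is a Cohen--Macaulay bipartite graph, I would use the Herzog--Hibi labeling of $V(G)=\{x_1,\dots,x_n,y_1,\dots,y_n\}$ (each $\{x_i,y_i\}\in E(G)$, $\{x_i,y_j\}\in E(G)$ forces $i\leq j$, and a transitivity clause), pick the variable order $x_1>x_2>\cdots>x_n>y_n>\cdots>y_1$, and verify the exchange condition on $G(J(G)^s)$ directly using that transitivity. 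The final ``in particular'' sentence then follows: given such a $\Delta$, the graph $G$ whose independence complex is $\Delta$ lies in one of the three classes, and $\Delta$ pure is exactly $G$ unmixed, so the equivalence $(2)\!\Leftrightarrow\!(3)$ translates verbatim.

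I expect the bipartite case to be the main obstacle. The chordal and cactus cases reduce to already-proven machinery (Theorem~\ref{main3} and the appendix), but no analogous ``$G^\pi$-structure'' covers all Cohen--Macaulay bipartite graphs, so one must argue with the poset structure of Herzog--Hibi by hand. The delicate point is producing a single variable ordering that simultaneously witnesses weak polymatroidality of \emph{every} power $J(G)^s$: given two generators $u,v$ of $J(G)^s$ agreeing up to position $q$ with $a_q<b_q$, one must exhibit an index $p>q$ and show $x_q u/x_p \in G(J(G)^s)$. The natural strategy is to write each generator as a product of $s$ minimal vertex covers in a canonical way (using the Herzog--Hibi partial order) and then push the disagreement down one factor at a time, but making the bookkeeping independent of $s$ requires care. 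Once that is settled the rest of the proposal is essentially a bookkeeping exercise.
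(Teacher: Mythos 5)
Your cycle of implications is exactly the one the paper uses, and three of the four arrows match verbatim: $(4)\Rightarrow(3)$ is trivial, $(3)\Rightarrow(2)$ is Theorem~\ref{main2}, and $(2)\Rightarrow(1)$ is the chain ``vertex decomposable $\Rightarrow$ shellable $\Rightarrow$ sequentially Cohen--Macaulay'' plus purity. For $(1)\Rightarrow(4)$ in the chordal case you take a slightly different route: the paper simply cites \cite[Theorem 1.7]{M1}, whereas you reformulate the Herzog--Hibi--Zheng classification as ``every Cohen--Macaulay chordal graph is a fully clique-whiskered graph $H^{\pi}$'' and then apply Theorem~\ref{main3}. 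That reformulation is correct (the free vertices of the facets in the HHZ partition are precisely the whisker vertices, and $W_i=F_i\setminus\{z_i\}$ gives the clique vertex-partition), so your argument is valid and has the advantage of being internal to the paper; it just needs the translation of HHZ spelled out. The cactus case via the corrected \cite[Theorem 4.3]{M} is the same as the paper's.

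The genuine gap is the bipartite case of $(1)\Rightarrow(4)$, which you explicitly leave unexecuted (``I expect the bipartite case to be the main obstacle \dots\ Once that is settled\dots''). You propose to verify the exchange condition on $G(J(G)^s)$ by hand from the Herzog--Hibi poset labeling, but you never produce the ordering-independent-of-$s$ bookkeeping you acknowledge is delicate, so as written the implication is not proved for bipartite $G$. The paper closes this by citation rather than computation: by \cite[Theorem 9.1.3]{HH} a Cohen--Macaulay bipartite graph arises from a finite poset $P$, so that $I(G)=I_2(P)$ and $J(G)=H_2(P)$ in the notation of \cite{EHM}, and \cite[Theorem 2.2]{EHM} states precisely that $(H_2(P))^s$ is weakly polymatroidal for all $s\geq 1$. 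So the statement you want is true and already in the literature; you should either invoke \cite[Theorem 2.2]{EHM} or actually carry out the exchange argument, since in its current form that step is an unproved assertion.
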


\begin{proof}
(1)$\Rightarrow$(4)  Suppose that $R/I(G)$ is Cohen-Macaulay. If $G$ is either a cactus graph or a chordal graph,  then $J(G)^{s}$ is weakly polymatroidal by  \cite[Theorem 4.3]{M} and \cite[Theorem 1.7]{M1} respectively.  If $G$ is bipartite, then $G$ comes from  a finite poset $P$, see \cite[Theorem 9.1.13]{HH}.  Using the notation in \cite{EHM}, we may write  $I(G)=I_2(P)$. From this it follows that  $(I_{\Delta}^{\vee})^s=(H_2(P))^s$ is weakly polymatroidal for all $s\geq 1$ by \cite[Theorem 2.2]{EHM}.

(4)$\Rightarrow$(3) Automatically.

(3)$\Rightarrow$(2) It follows from Theorem~\ref{main2}.

(2)$\Rightarrow$(1) Every vertex decomposable graph is sequentially Cohen-Macaulay and an unmixed sequentially Cohen-Macaulay  graph is a Cohen-Macaulay graph.
\end{proof}

\begin{Corollary} Let $C$ be a cycle of size $5$. Then $\reg(J(C)^{(s)})=\reg(J(C)^s)=3s.$
\end{Corollary}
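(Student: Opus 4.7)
The plan is to deduce the statement directly from Theorem~\ref{main1} and Corollary~\ref{3.2}, reducing everything to standard facts about $C_5$.

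First, since $C = C_5$ is an odd cycle, Theorem~\ref{main1} immediately gives $\reg(J(C)^{(s)}) = \reg(J(C)^s)$ for all $s \geq 1$, so it suffices to compute $\reg(J(C)^s)$ and show it equals $3s$. I would record the trivial lower bound $\reg(J(C)^s) \geq \mathrm{Deg}(J(C)^s) = 3s$, where the value $3s$ comes from the fact that every minimal vertex cover of $C_5$ has size $3$ (the maximal independent sets of $C_5$ all have $2$ vertices, so their complements have $3$ vertices). In particular, $C_5$ is unmixed.

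Next, I would verify the hypotheses of Corollary~\ref{3.2} applied to $C_5$. The graph $C_5$ is a cactus (all five edges lie on the unique $5$-cycle, so each edge belongs to at most one cycle), and it is unmixed as noted. Moreover, $R/I(C_5)$ is Cohen-Macaulay; this is the classical fact that among cycles $C_n$ only $C_3$ and $C_5$ have a Cohen-Macaulay edge ideal. Hence all four equivalent conditions of Corollary~\ref{3.2} hold, and in particular $J(C_5)^s$ is weakly polymatroidal in some ordering of the variables for every $s \geq 1$.

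Finally, since weakly polymatroidal ideals have linear quotients (as noted after the definition in the introduction), and $J(C_5)^s$ is generated in the single degree $3s$, it has a linear resolution. This yields $\reg(J(C_5)^s) = 3s$, which combined with the first step gives $\reg(J(C)^{(s)}) = \reg(J(C)^s) = 3s$, as desired. There is no real obstacle here: the statement is an immediate corollary assembled from the main theorems already proved, and the only verification needed is the Cohen-Macaulayness of $R/I(C_5)$, which is classical.
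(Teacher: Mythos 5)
Your proposal is correct and follows essentially the same route as the paper: invoke Theorem~\ref{main1} for the equality of the two regularities, then apply Corollary~\ref{3.2} to $C_5$ (a cactus, unmixed, with $R/I(C_5)$ Cohen--Macaulay) to get weak polymatroidality of $J(C_5)^s$, hence a linear resolution in degree $3s$. The paper leaves the final computation of the value $3s$ implicit, whereas you spell it out via linear quotients and equigeneration in degree $3s$; this is just added detail, not a different argument.
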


\begin{proof} It is known that if $C$ is a cycle of size $n$, then $R/I(C)$ is Cohen-Macaulay if and only if $R/I(C)$ is sequentially Cohen-Macaulay if and only if $n\in \{3,5\}$, see \cite[Proposition 4.1]{FT}. By this fact the result follows from Theorem~\ref{main1} together with Corollary~\ref{3.2}.
\end{proof}

We also have the following  partial converse of Theorem~\ref{main2}.

\begin{Proposition}\label{3.4} Let $\Delta$ be a pure simplicial complex on $[n]$. If $1$ is a shedding vertex of $\Delta$ such that $I_{\Delta_i}^{\vee}$ is weakly polymatroidal for $i=1,2$, then $I_{\Delta}^{\vee}$ is weakly polymatroidal. Here, $\Delta_1:=\mathrm{lk}_{\Delta}(1)$
and $\Delta_2:=\mathrm{del}_{\Delta}(1)$.
\end{Proposition}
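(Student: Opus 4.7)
The plan is to exploit the decomposition
\[
G(I_{\Delta}^{\vee}) \;=\; G(I_{\Delta_1}^{\vee}) \;\sqcup\; x_1 \cdot G(I_{\Delta_2}^{\vee}),
\]
which comes from splitting facets of $\Delta$ into those containing $1$ (giving, after removing $1$, the facets of $\Delta_1$) and those avoiding $1$ (which, thanks to the shedding property, are exactly the facets of $\Delta_2$). Note that generators coming from $G(I_{\Delta_i}^{\vee})$ involve only $x_2,\dots,x_n$, and that $I_\Delta^\vee$ is squarefree, so exponent vectors lie in $\{0,1\}^n$.

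Take $u=x_1^{a_1}\cdots x_n^{a_n}$ and $v=x_1^{b_1}\cdots x_n^{b_n}$ in $G(I_{\Delta}^{\vee})$ with $a_i=b_i$ for $i<q$ and $a_q<b_q$. First I would dispose of the case $q\geq 2$, where $a_1=b_1$. If $a_1=b_1=0$, then both $u$ and $v$ belong to $G(I_{\Delta_1}^{\vee})$ and weak polymatroidality of $I_{\Delta_1}^{\vee}$ directly supplies some $p>q$ with $x_q u/x_p\in G(I_{\Delta_1}^{\vee})\subseteq G(I_{\Delta}^{\vee})$. If $a_1=b_1=1$, write $u=x_1u'$ and $v=x_1v'$ with $u',v'\in G(I_{\Delta_2}^{\vee})$, apply weak polymatroidality of $I_{\Delta_2}^{\vee}$ to the pair $(u',v')$, and multiply the resulting monomial by $x_1$.

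The heart of the argument is the case $q=1$, i.e.\ $a_1=0$ and $b_1=1$, so that $u\in G(I_{\Delta_1}^{\vee})$. What must be shown is that some $p>1$ satisfies $x_1u/x_p\in G(I_{\Delta}^{\vee})$; in view of the decomposition this is equivalent to $u/x_p\in G(I_{\Delta_2}^{\vee})$ for some $p\neq 1$ with $x_p\mid u$. Here $u=x_{[n]\setminus F}$ for a facet $F\ni 1$ of $\Delta$. Because $1$ is a shedding vertex, $F\setminus\{1\}$ is properly contained in some facet $H$ of $\Delta_2$. Purity of $\Delta$ forces $|H|=|F|$, whence $H=(F\setminus\{1\})\cup\{p\}$ for a unique $p\notin F$; this $p$ is different from $1$ and $x_p\mid u$. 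One then reads off $u/x_p=x_{V\setminus H}\in G(I_{\Delta_2}^{\vee})$, as required.

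The main obstacle is precisely this $q=1$ case, where the combinatorial shedding property has to be turned into a one-variable swap on a squarefree generator. Purity plays an essential role: without it the facet $H$ supplied by the shedding condition could strictly exceed $F$ in size, and then $u/x_p$ would fail to be a minimal generator of $I_{\Delta_2}^{\vee}$ for any single $p$. Everything else is a bookkeeping reduction to the weak polymatroidality of $I_{\Delta_1}^{\vee}$ and $I_{\Delta_2}^{\vee}$ via the disjoint-union description of $G(I_{\Delta}^{\vee})$.
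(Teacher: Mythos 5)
Your proof is correct and follows essentially the same route as the paper's: the same disjoint decomposition $G(I_{\Delta}^{\vee})=G(I_{\Delta_1}^{\vee})\sqcup x_1\cdot G(I_{\Delta_2}^{\vee})$, the same reduction to the hypotheses on $I_{\Delta_1}^{\vee}$ and $I_{\Delta_2}^{\vee}$ when $a_1=b_1$, and in the remaining case $q=1$ the same use of the shedding property together with purity to produce a facet $H=(F\setminus\{1\})\cup\{p\}$ of $\Delta_2$ and hence the swap $x_1u/x_p\in G(I_{\Delta}^{\vee})$. If anything, your write-up states the divisibility relation between $u$ and the generator coming from $H$ in the correct direction, where the paper's version contains a small typo.
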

\begin{proof} Since $1$ is a shedding vertex of $\Delta$, we have \begin{equation*}
 G(I_{\Delta}^{\vee})=G(I_{\Delta_1}^{\vee})\bigsqcup \{x_1u\:\; u\in G(I_{\Delta_2}^{\vee})\}.
 \end{equation*}
 Note that  $\mathrm{supp}(u)\subseteq \{2,\ldots,n\}$ for any $u\in G(I_{\Delta_1}^{\vee})\bigsqcup  G(I_{\Delta_2}^{\vee})$.

 Let $\mathbf{u}=x_1^{a_1}\cdots x_n^{a_n}$ and $\mathbf{v}=x_1^{b_1}\cdots x_n^{b_n}$ be monomials belonging to $G(I_{\Delta}^{\vee})$ satisfying $a_1=b_1,\ldots,a_{i-1}=b_{i-1}$ and $a_i<b_i$. We need to find a monomial $\mathbf{w}\in G(I_{\Delta}^{\vee})$ such that $\mathbf{w}=x_i\mathbf{u}/x_j$ for some $j>i$.  If either $\{\mathbf{u,v}\}\subseteq G(I_{\Delta_1}^{\vee})$ or  $\{\mathbf{u,v}\}\subseteq \{x_1u\:\; u\in G(I_{\Delta_2}^{\vee})\}$, the existence of $\mathbf{w}$ follows by the assumption that $I_{\Delta_i}^{\vee}$ is weakly polymatroidal for $i=1,2$. So we only need to consider the case that $\mathbf{u}\in G(I_{\Delta_1}^{\vee})$ and $\mathbf{v}\in \{x_1u\:\; u\in G(I_{\Delta_2}^{\vee})\}$. Note that $i=1$ in this case.  Moreover, since $1$ is a shedding vertex, there exists  $\mathbf{v_1}\in G(I_{\Delta_2}^{\vee})$ such that $\mathbf{u}$ divides $\mathbf{v_1}$.   This implies $\mathbf{v_1}=\mathbf{u}x_j$ for some $j>1$ by the purity of $\Delta$. Hence $\mathbf{w:}=  x_1\mathbf{v_1}=x_1\mathbf{u}/x_j$ meets the requirement and so we are done. \end{proof}

 Proposition~\ref{3.4} together with Corollary~\ref{3.2} leads us to present the following.

\begin{Conjecture} If $\Delta$ is a vertex decomposable  pure simplicial complex, then $I_{\Delta}^{\vee}$ is weakly polymatroidal.
\end{Conjecture}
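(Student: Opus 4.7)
The plan is to induct on the number of vertices $n$ of $\Delta$, using Proposition~\ref{3.4} as the engine for the inductive step. The base case $n = 1$ is trivial, since $\Delta$ is then a simplex and $I_\Delta^\vee$ is generated by at most one monomial.

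For the inductive step, let $\Delta$ be pure and vertex decomposable on $[n]$ and pick a shedding vertex $v$; by the definition of vertex decomposability, both $\Delta_1 := \mathrm{lk}_\Delta(v)$ and $\Delta_2 := \mathrm{del}_\Delta(v)$ are again vertex decomposable. First I would verify that purity passes to both: the facets of $\Delta_1$ are $F \setminus \{v\}$ for facets $F$ of $\Delta$ containing $v$, all of size $\dim \Delta$; and the shedding condition forces every facet of $\Delta_2$ to be a facet of $\Delta$ avoiding $v$, hence of size $\dim \Delta + 1$. So $\Delta_1, \Delta_2$ are pure and vertex decomposable on subsets of $[n] \setminus \{v\}$, and the inductive hypothesis produces orderings $\sigma_1, \sigma_2$ making $I_{\Delta_1}^\vee, I_{\Delta_2}^\vee$ weakly polymatroidal.

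The final step would be to amalgamate $\sigma_1$ and $\sigma_2$ into a single ordering $\sigma$ of $[n] \setminus \{v\}$, prepend $v$ as the smallest variable, and apply Proposition~\ref{3.4} to conclude that $I_\Delta^\vee$ is weakly polymatroidal in the resulting ordering.

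The principal obstacle will be precisely this amalgamation step: the orderings $\sigma_1$ and $\sigma_2$ need not agree on $V(\Delta_1) \cap V(\Delta_2)$, and weak polymatroidality is very sensitive to the relative order of any two variables in the support of the ideal. I would try to bypass this in one of two ways. First, one could strengthen the inductive statement to give control over an initial segment of the witnessing ordering, and propagate that control through the recursion so that the two restrictions to $V(\Delta_1) \cap V(\Delta_2)$ can be forced to coincide. Second, one could exploit the freedom in the choice of shedding vertex --- for instance by showing that $v$ can always be chosen so that $V(\Delta_1) \subseteq V(\Delta_2)$ and some ordering produced by the inductive hypothesis on $\Delta_2$ already restricts to a valid ordering for $\Delta_1$. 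Isolating the combinatorial mechanism that achieves this amalgamation in the unmixed graph cases already handled in Corollary~\ref{3.2} should provide a template for the general argument.
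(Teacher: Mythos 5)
This statement is labelled a \emph{Conjecture} in the paper: the authors do not prove it, and offer only partial evidence, namely Proposition~\ref{3.4} (an inductive step valid under the extra hypothesis that a \emph{single} ordering witnesses weak polymatroidality for both the link and the deletion), Corollary~\ref{3.2} and Proposition~\ref{3.5} (special classes of unmixed graphs). Your proposal reconstructs exactly the induction that Proposition~\ref{3.4} is designed to drive, but the step you yourself flag as ``the principal obstacle'' --- amalgamating the orderings $\sigma_1$ for $\mathrm{lk}_\Delta(v)$ and $\sigma_2$ for $\mathrm{del}_\Delta(v)$ into one ordering of $[n]\setminus\{v\}$ that simultaneously makes $I_{\Delta_1}^{\vee}$ and $I_{\Delta_2}^{\vee}$ weakly polymatroidal --- is not a deferrable technicality; it is the entire content of the conjecture. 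Weak polymatroidality is defined relative to a total order of the variables, the inductive hypothesis gives no control over \emph{which} orders work for $\Delta_1$ and $\Delta_2$, and nothing in vertex decomposability guarantees that the two sets of admissible orders intersect. Neither of your two proposed remedies (strengthening the inductive statement to control an initial segment of the ordering, or choosing the shedding vertex so that one ordering restricts to the other) is carried out, so the argument reduces an open problem to an equally open one. This is a genuine gap, and as far as the paper is concerned the statement remains unproved.

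Two smaller points. First, Proposition~\ref{3.4} requires the shedding vertex to be the vertex $1$, i.e.\ the \emph{largest} variable in the order $x_1>x_2>\cdots>x_n$: its proof only needs to treat the case where the first discrepancy between $\mathbf{u}$ and $\mathbf{v}$ occurs at $x_1$, which forces $\mathbf{v}\in\{x_1u\:\; u\in G(I_{\Delta_2}^{\vee})\}$ and $\mathbf{u}\in G(I_{\Delta_1}^{\vee})$. Your phrase ``prepend $v$ as the smallest variable'' gets this backwards; $v$ must be placed ahead of every variable of $\Delta_1$ and $\Delta_2$. Second, the parts of your outline that are sound --- purity of the link, purity of the deletion via the shedding condition (so that every facet of $\mathrm{del}_\Delta(v)$ is a facet of $\Delta$ avoiding $v$), and the reduction to Proposition~\ref{3.4} once a common ordering is in hand --- are correct but are already contained in the paper's observation on shedding vertices at the start of Section~3 and in Proposition~\ref{3.4} itself; they do not add anything beyond what the authors state as motivation for posing the conjecture.
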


A weak form of this conjecture is:

\begin{Conjecture} \label{3} If $G$ is an unmixed vertex decomposable graph, then $J(G)$ is weakly polymatroidal.
\end{Conjecture}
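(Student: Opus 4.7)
The natural approach is induction on $|V(G)|$, using Proposition~\ref{3.4} as the inductive engine. Since $G$ is unmixed, its independence complex $\Delta = \mathrm{Ind}(G)$ is pure, and vertex decomposability supplies a shedding vertex which, after relabeling the variables, we take to be $1$. Proposition~\ref{3.4} then reduces the problem to proving that $I_{\mathrm{lk}_\Delta(1)}^\vee$ and $I_{\mathrm{del}_\Delta(1)}^\vee$ are weakly polymatroidal. These are precisely the vertex cover ideals of $G \setminus N_G[1]$ and $G \setminus \{1\}$; both graphs are vertex decomposable by the shedding axiom, so the inductive hypothesis would close the argument provided both remain unmixed.

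The link side is automatic: purity of $\Delta$ forces $\mathrm{lk}_\Delta(1)$ to be pure, which in graph terms says that $G \setminus N_G[1]$ is unmixed. The deletion side is the essential obstacle, since $G \setminus \{1\}$ need not be unmixed even when $G$ is and $1$ is a shedding vertex. One could try to sidestep this by attacking instead the stronger preceding conjecture on pure vertex decomposable complexes, whose input hypothesis is only purity of $\Delta$; but this merely postpones the difficulty, because the induction still demands purity of the deletion.

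My plan is therefore to look for a \emph{good} shedding vertex $v$, by which I mean one for which $G \setminus \{v\}$ is either unmixed or splits as the disjoint union of an isolated vertex and a smaller unmixed vertex decomposable graph, while $G \setminus N_G[v]$ remains unmixed. In every case already treated — bipartite graphs via the poset structure of \cite{EHM}, chordal graphs via a simplicial vertex adjacent to a leaf, cactus graphs via the structure theorem in \cite{M}, and the girth $\geq 5$ case of Proposition~\ref{3.5} — such a good shedding vertex exists and typically arises as the endpoint of a pendant edge (a whisker). A concrete first step is to prove that every unmixed vertex decomposable graph admits either a pendant vertex or a simplicial vertex whose closed neighborhood has a sufficiently controlled local structure, and then to verify directly that removing such a vertex preserves unmixedness in the sense required.

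The main obstacle is precisely this existence question: extracting a good shedding vertex from the abstract shedding axiom in full generality. Absent a structural classification of unmixed vertex decomposable graphs, it is unclear how to guarantee such a vertex. As a secondary route, I would look for an ordering of the variables, built from a shedding order of $\Delta$, with respect to which the weakly polymatroidal exchange property can be verified directly on pairs of minimal vertex covers, bypassing the link-deletion recursion altogether and paralleling the arguments of \cite{EHM} and \cite{M} in the bipartite and cactus cases.
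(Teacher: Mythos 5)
This statement is posed in the paper as a \emph{conjecture}: the authors give no proof, only supporting evidence (Corollary~\ref{3.2} for unmixed cactus, bipartite and chordal graphs, Proposition~\ref{3.5} for girth at least $5$, and the partial converse Proposition~\ref{3.4}). Your proposal is likewise not a proof but a programme, as you yourself acknowledge, so there is nothing complete to certify; but it is worth pointing out that your diagnosis of where the induction breaks is not the right one.

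You identify the obstacle as the deletion failing to stay unmixed. In fact, if $1$ is a shedding vertex of the pure complex $\Delta=\mathrm{Ind}(G)$, then the facets of $\mathrm{del}_\Delta(1)$ are exactly the facets of $\Delta$ not containing $1$: the shedding condition forbids any facet of the link from surviving as a facet of the deletion, which is precisely what rules out the ``short'' facets. Hence $\mathrm{del}_\Delta(1)$ is automatically pure, i.e.\ $G\setminus\{1\}$ is unmixed, and the same holds for $\mathrm{lk}_\Delta(1)$, i.e.\ for $G\setminus N_G[1]$. So both inputs to the induction are unmixed and vertex decomposable, and your search for a ``good'' shedding vertex in the sense of preserving unmixedness is unnecessary. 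The genuine gap in closing the induction through Proposition~\ref{3.4} lies elsewhere: the inductive hypothesis only yields that $I_{\mathrm{lk}_\Delta(1)}^{\vee}$ and $I_{\mathrm{del}_\Delta(1)}^{\vee}$ are weakly polymatroidal in \emph{some} orderings of the variables $x_2,\dots,x_n$, and these two orderings need not agree, whereas Proposition~\ref{3.4} requires both ideals to be weakly polymatroidal with respect to one and the same ordering in which the shedding vertex comes first. Reconciling the two orderings (or producing a single global shedding order against which the exchange property can be checked directly, as is done in the bipartite, chordal, cactus and girth~$\geq 5$ cases) is exactly what is missing, and your proposal does not supply it.
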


The condition that $G$ is unmixed is necessary in the above conjecture as shown by the following example.
\begin{Example}\em A {\it star graph} is a graph in which exactly one vertex has degree at least 2.
Let $G$ be a star graph with more than three vertices of degree 1. Then $G$ is vertex decomposable, but $J(G)$ is not  weakly polymatroidal in any ordering of variables.

\end{Example}

The following results are other examples in support of Conjecture 2.
\begin{Proposition} \label{3.5} Let $G$ be a Cohen-Macaulay graph of girth at least 5. Then $J(G)$ is weakly polymatroidal.
\end{Proposition}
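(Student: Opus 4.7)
My plan is to proceed by induction on $|V(G)|$, using Proposition~\ref{3.4} as the inductive mechanism. The starting point is a structural consequence of the hypothesis: a Cohen-Macaulay graph $G$ of girth at least $5$ is pure vertex decomposable, and (by the known classification of such graphs in the literature on CM graphs of large girth) a connected such $G$ on more than five vertices must contain a pendant vertex. Thus, apart from the base cases $G=K_1$, $G=K_2$ and $G=C_5$ (the last verified by directly listing its five minimal vertex covers $\{1,2,4\},\{1,3,4\},\{1,3,5\},\{2,3,5\},\{2,4,5\}$ and checking the weak polymatroidal exchanges in the natural order), we may assume $G$ has a pendant $u$ attached to some vertex $v$.

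I would take $v$ to be vertex $1$ in the ordering of $\{x_1,\ldots,x_n\}$. The pendant $u$ immediately makes $v$ a shedding vertex of $\Delta(G)$: for every maximal independent set $I$ with $v\in I$ one has $u\notin I$ while $N(u)=\{v\}\subseteq I$, so $u$ is the required witness. Next identify $\mathrm{lk}_{\Delta(G)}(v)=\Delta(G-N[v])$ and $\mathrm{del}_{\Delta(G)}(v)=\Delta(G-v)$; in $G-v$ the pendant $u$ becomes isolated, so $I_{\mathrm{del}_\Delta(v)}^{\vee}$ does not involve $x_u$ and equals $J(G-\{u,v\})$ as an ideal. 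Both of the smaller graphs $G-N[v]$ and $G-\{u,v\}$ are Cohen-Macaulay of girth at least $5$ on strictly fewer vertices: the former because it arises as the link at a shedding vertex of the pure vertex decomposable complex $\Delta(G)$, the latter by the standard whisker reduction for Cohen-Macaulay graphs with a pendant edge.

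By the induction hypothesis, $J(G-N[v])=I_{\mathrm{lk}_\Delta(v)}^{\vee}$ and $J(G-\{u,v\})=I_{\mathrm{del}_\Delta(v)}^{\vee}$ are weakly polymatroidal in some orderings of their respective variable sets. One then chooses a common ordering of $V(G)\setminus\{v\}$ whose restrictions to $V(G)\setminus N[v]$ and to $V(G)\setminus\{u,v\}$ realize these two orderings simultaneously; with this choice, Proposition~\ref{3.4} delivers weak polymatroidality of $J(G)=I_{\Delta(G)}^{\vee}$.

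The main technical obstacle is exactly this coordination of orderings, since $V(G)\setminus N[v]$ is a proper subset of $V(G)\setminus\{u,v\}$ whenever $v$ has neighbors other than $u$. One natural way around it is to strengthen the inductive hypothesis to produce a weakly polymatroidal ordering extending any prescribed admissible ordering on an induced subgraph, or, equivalently, to build the ordering globally from a pendant-elimination sequence on $G$, placing consecutive pairs $(\text{neighbor-of-pendant},\text{pendant})$ into the order in which they are peeled off. A secondary obstacle is the auxiliary structural claim that a connected Cohen-Macaulay graph of girth at least $5$ on more than five vertices necessarily has a pendant, which needs to be either cited from the existing classification of Cohen-Macaulay graphs of large girth or verified directly using the pure vertex decomposability and the girth hypothesis.
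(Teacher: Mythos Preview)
Your approach is genuinely different from the paper's. The paper invokes the classification of Cohen--Macaulay graphs of girth $\geq 5$ from \cite{HMT}: every such $G$ is a $\mathcal{PC}$ graph, meaning $V(G)$ partitions into basic $5$-cycles $C^1,\ldots,C^k$ and pendant edges $L^1,\ldots,L^l$. It then writes down one explicit global ordering of the variables (the vertices of each basic $5$-cycle in a specific internal order, then all pendant pairs) and verifies the weak polymatroidal exchange condition by a short case analysis on which variable $z$ is the first place where $f$ and $g$ disagree. No induction, no Proposition~\ref{3.4}.

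Your inductive scheme has a real gap at the structural step. The claim that a connected Cohen--Macaulay graph of girth $\geq 5$ on more than five vertices must contain a pendant is false. Take two $5$-cycles $1\,2\,3\,4\,5$ and $6\,7\,8\,9\,10$ and add the single edge $\{1,6\}$. Each $5$-cycle is basic (only one vertex of degree $\geq 3$), so by the $\mathcal{PC}$ characterization this graph is Cohen--Macaulay of girth $5$; yet every vertex has degree $2$ or $3$ and none is a leaf. Your induction has no shedding vertex to peel off here, and the graph is not one of your declared base cases. More generally, $\mathcal{PC}$ graphs can have $k\geq 2$ basic $5$-cycles and $l=0$ pendants, so a pendant-peeling induction cannot reach all of them.

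Even in the cases where a pendant does exist, the ordering-coordination issue you flag is not just a technicality: Proposition~\ref{3.4} requires both $I_{\mathrm{lk}_\Delta(1)}^\vee$ and $I_{\mathrm{del}_\Delta(1)}^\vee$ to be weakly polymatroidal in the \emph{restriction} of a single ordering on $\{2,\ldots,n\}$, whereas your inductive hypothesis only supplies each in \emph{some} ordering. Since $V(G)\setminus N[v]$ and $V(G)\setminus\{u,v\}$ overlap nontrivially and may (recursively) each contain basic $5$-cycles demanding specific internal orderings, it is not clear that a compatible common extension exists without essentially reconstructing the paper's global ordering. The paper's direct argument sidesteps both obstacles by fixing the ordering once from the $\mathcal{PC}$ decomposition.
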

\begin{proof} By \cite[Theorem 2.4]{HMT},  we have $G$ is a $\mathcal{PC}$ graph. Recall an induced 5-cycle of  $G$ is  {\it basic} if it does not contains two adjacent vertices of degree three or more in $G$, and an edge is  a {\it pendant} if it contains a vertex of degree 1.   Let $C^1, \ldots, C^k$ be the set of all basic 5-cycles of $G$ and let $L^1,\ldots,L^l$ be the set of all pendants of $G$.  Recall that $G$ is a $\mathcal{PC}$ graph if $V(G)$ can be partitioned into $V(G)=V(C^1)\sqcup \cdots \sqcup V(C^k)\sqcup  V(L^1)\sqcup \cdots \sqcup V(L^l)$.

Label the  vertices of $C^{i}$ successively by $x_{i1}, x_{i4}, x_{i2}, x_{i3}, x_{i5}$ such that $x_{i3},x_{i4}, x_{i5}$ are vertices of degree 2 for $i=1,\ldots,k$, and let $y_{i1}, y_{i2}$ be vertices of $L^i$ such that $y_{i2}$  has degree 1 for $i=1,\ldots,l$. We work with the following ordering of variables:
$$x_{11}>\cdots >x_{15}>\cdots>x_{k1}>\cdots x_{k5}>y_{11}>y_{12}>\cdots>y_{l1}>y_{l2}.$$
To prove $J(G)$ is weakly polymatroidal, we let $f,g$ be minimal monomial generators of $J(G)$ with $f\neq g$, and let $z$ be a variable such that $\deg_{z'}f=\deg_{z'}g$ for $z'>z$ and $\deg_{z}g<\deg_{z}f$. We need to find a variable $w<z$ such that $zg/w\in J(G)$. To this end, for each monomial $h\in J(G)$,  we write $h$ as following: $$h=h(C^1)\cdots h(C^k)h(L^1)\cdots h(L^l),$$ where $h(C^i)$ and $h(L^j)$ are monomials such that $\mathrm{supp}(h(C^i))\subseteq V(C^i)$ for $i=1,\ldots,k$ and $\mathrm{supp}(h(L^j))\subseteq V(L^j)$ for $j=1,\ldots,l$.   We consider the following cases:

{\bf Case 1} $z=x_{i1}$ for some $i$: Then $g(C^i)\in \{x_{i2}x_{i4}x_{i5}, x_{i3}x_{i4}x_{i5}\}$. We set $w=x_{i4}$ if $g(C^i)=x_{i2}x_{i4}x_{i5}$ and  set $w=x_{i5}$ if $g(C^i)=x_{i3}x_{i4}x_{i5}$. Then $zg/w\in J(G)$.

{\bf Case 2} $z=x_{i2}$ for some $i$: Then $g(C^i)\in \{x_{i1}x_{i4}x_{i3}, x_{i3}x_{i4}x_{i5}\}$. We set $w=x_{i4}$  if $g(C^i)=x_{i1}x_{i4}x_{i3}$, and set $w=x_{i3}$ and  set $w=x_{i4}$ if $g(C^i)=x_{i3}x_{i4}x_{i5}$. Then $zg/w\in J(G)$.

{\bf Case 3} $z=x_{i3}$ for some $i$: Then $g(C^i)=x_{i1}x_{i5}x_{i2}$. We set $w=x_{i5}$.

 {\bf Case 4} $z=x_{i4}$ for some $i$: This case is impossible.

{\bf Case 5} $z=x_{i5}$ for some $i$: This case is impossible again.

 {\bf Case 6} $z=y_{i1}$ for some $i$: Then $h(L_i)=y_{i2}$. We set $w=y_{i2}$.

 {\bf Case 7} $z=y_{i2}$ for some $i$:  This case is  impossible again.

 Thus, in all possible cases, we find a variable  $w$ which meets the requirement. This completes the proof.\end{proof}

    A graph $G$ is called  {\it very well-covered} if $|V(G)|$ is even and every minimal vertex cover of $G$ contains exactly $\frac{|V(G)|}{2}$ vertices. Clearly, every very well-covered graph is unmixed, i.e. well-covered.
 \begin{Proposition} \label{3.6} Let $G$ be a Cohen-Macaulay very well-covered graph. Then $J(G)$ is weakly polymatroidal.
\end{Proposition}

\begin{proof} In view of \cite[Lemma 3.1]{MMCRTY}, there is a relabeling of vertices $$V(G)=\{x_1,\ldots,x_n,y_1,\ldots,y_n\}$$ such that the following five conditions hold:

(i) $X=\{x_1,\ldots,x_n\}$ is a minimal vertex cover of $G$ and $Y=\{y_1,\ldots,y_n\}$ is an independent set of $G$;

(ii) $\{x_i,y_i\}\in E(G)$ for $i=1,\ldots,n$;

(iii) if $\{z_i, x_j\}, \{y_j,x_k\}\in E(G)$, then $\{z_i,x_k\}\in E(G)$ for distinct $i,j,k$ and for $z_i\in \{x_i,y_i\}$;

(iv) if $\{x_i,y_j\}\in E(G)$, then $\{x_i,x_j\}\notin E(G)$;

(v) if $\{x_i,y_j\}\in E(G)$, then $i\leq j$.

We will show that $J(G)$ is weakly polymatroidal in the ordering: $x_1>x_2>\cdots>x_n>y_1>y_2>\cdots>y_n$. Let $f,g$ be minimal monomial generators of $J(G)$ with $f\neq g$. Then we may write $f$ and $g$ as follows: $$f=\prod_{z\in C}z=x_1^{a_1}\cdots x_n^{a_n}y_1^{a_{n+1}}\cdots y_n^ {a_{n+n}}$$ and $$g=\prod_{z\in D}z=x_1^{b_1}\cdots x_n^{b_n}y_1^{b_{n+1}}\cdots y_n^{b_{n+n}}.$$
Here, $C$ and $D$ are minimal vertex covers of $G$, and $a_i,b_i\in \{0,1\}$ for all $i$. Note that $a_i+a_{n+i}=1$ and $b_i+b_{n+i}=1$ for $i=1,\ldots,n$, there exists $1\leq k\leq n$ such that $a_i=b_i$ for $i=1,\ldots, k-1$ and $a_k>b_k$. This actually means that $x_k\in C$ and $x_k\notin D$. From this it follows that $y_k\in D$. We claim that $A:=(D\cup \{x_k\})\setminus \{y_k\}$ is also a vertex cover of $G$.

Let $e=\{z_1,z_2\}\in E(G)$. If $y_k\notin e$, then it is clear that $A\cap e\neq \emptyset$. So we assume that $e=\{x_j,y_k\}$. By (v), we have $j\leq k$. If $j=k$, then $A\cap e=\{x_k\}$. So we may assume further that $j<k$.  Since $y_k\notin C$, it follows that $x_j\in C$ and so $b_j=a_j=1.$ This implies $x_j\in D$ and so $A\cap e\neq \emptyset$. Thus, we prove the claim and it follows that $x_kg/y_k=\prod_{z\in A}$ is a minimal monomial generator of $J(G)$, completing the proof.
\end{proof}

Let $G$ be a graph and $s\geq 1$ an integer. In \cite{S2}, the graph $G_s$ is constructed so that $J(G_s)$ is the polarization of $J(G)^{(s)}$.

\begin{Corollary} \label{3.7} Let $G$ be a Cohen-Macaulay very well-covered graph. Then $J(G)^{(s)}$ is weakly polymatroidal for all $s\geq 1$.
\end{Corollary}

\begin{proof} It is immediate from the definitions that if $I$ is a monomial ideal generated in one degree then $I$ is weakly polymatroidal if and only if the polarization of $I$ is weakly polymatroidal. By \cite[Proposition 3.1]{S2}, if $G$ is a Cohen-Macaulay very well-covered graph then so is $G_s$. Now, the result follows from Proposition~\ref{3.6}.
\end{proof}

\section{Powers of cover ideals of clique-whiskering graphs}

Let $W=G^{\pi}$ be the fully clique-whiskering of some graph $G$ by some clique vertex-partition $\pi$ of $G$. In this section we  prove  $J(W)^s$ is weakly polymatroidal for all $s\geq 1$.  As a consequence, we have  $\reg(J(W)^s)=\reg(J(W)^{(s)})=s|V(G)|$ for all $s\geq 1$.

For convenience, we introduce  the notions of a simplicial co-complex and the face ideal of a simplicial co-complex with respect to a partition.
\begin{Definition} \label{4.1}\em Let $V$ be a finite set.  We say that a collection $\nabla$ of subsets of $V$ is a {\it simplicial co-complex} on $V$  if whenever $F\in \nabla$ and $F\subseteq G\subseteq V$ one has $G\in \nabla$. Assume that $V$ has a partition $V=V_1\sqcup \cdots\sqcup V_t$ and assume that $V_i=\{x_{i1},\ldots,x_{ik_i}\}$ for $i=1,\ldots, t$. For each face $F\in \nabla$, we put $$u_F:=\prod_{x_{ij}\in F}x_{ij}\prod_{i\in [t]}y_i^{k_i-|F\cap V_i|}.$$ Then the face ideal of $\nabla$ with respect to this partition is the following monomial ideal $J$ in the polynomial ring $k[x_{11},\ldots,x_{1k_1},\ldots,x_{t1},\ldots,x_{tk_t},y_1,\ldots,y_t]$:
$$J=(u_F:F\in \nabla).$$

\end{Definition}

\begin{Proposition}\label{w1} Let $\nabla$ be  a simplicial co-complex on $V$ and assume   that $V$ has a partition $V=V_1\sqcup \cdots\sqcup V_t$ with $V_i=\{x_{i1},\ldots,x_{ik_i}\}$ for $i=1,\ldots, t$. Denote by $J$ the face ideal of $\nabla$ with respect to this partition.  Then $J^s$ is weakly polymatroidal for each $s\geq 1$ in the  ordering: $x_{11}>\cdots>x_{1k_1}>\cdots>x_{t1}>\cdots>x_{tk_t}>y_1>\cdots>y_t$.  \end{Proposition}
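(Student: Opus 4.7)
The strategy is to verify the weakly polymatroidal exchange condition directly, after first collapsing the analysis to the $x$-variables.

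The starting observation is that every generator $u_F$ of $J$ has total degree
\[
|F|+\sum_i(k_i-|F\cap V_i|)=|V|,
\]
so both $J$ and $J^s$ are generated in a single degree (namely $s|V|$). In particular, any monomial of degree $s|V|$ lying in $J^s$ is automatically a minimal generator.

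Next I would take distinct $u,v\in G(J^s)$ agreeing on all variables strictly larger than some variable $z$, with $\deg_z(u)<\deg_z(v)$, and pick factorizations $u=u_{F_1}\cdots u_{F_s}$ and $v=u_{G_1}\cdots u_{G_s}$ with all $F_\ell,G_\ell\in\nabla$. A direct count gives
\[
\deg_{y_i}(u)=sk_i-\sum_{j=1}^{k_i}\deg_{x_{ij}}(u),
\]
and analogously for $v$. Since every $y$-variable follows every $x$-variable in the prescribed ordering, if the first position of disagreement were some $y_i$ then all $x_{ij}$-degrees would already coincide, forcing $\deg_{y_i}(u)=\deg_{y_i}(v)$ and contradicting $\deg_z(u)<\deg_z(v)$. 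Hence the first disagreement must occur at some $z=x_{ij}$.

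With $z=x_{ij}$, from $\deg_{x_{ij}}(u)<\deg_{x_{ij}}(v)\leq s$ I can pick $\ell$ with $x_{ij}\notin F_\ell$. The co-complex property applied to $F_\ell\subseteq F_\ell\cup\{x_{ij}\}\subseteq V$ gives $F'_\ell:=F_\ell\cup\{x_{ij}\}\in\nabla$; since $x_{ij}\in V_i\setminus F_\ell$ one has $|F_\ell\cap V_i|<k_i$, so $\deg_{y_i}(u_{F_\ell})\geq 1$, and a short check shows $u_{F'_\ell}=(x_{ij}/y_i)\,u_{F_\ell}$. Replacing the $\ell$-th factor in the factorization of $u$ yields $(x_{ij}/y_i)\,u\in J^s$, a monomial of degree $s|V|$ hence lying in $G(J^s)$. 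Since $y_i<x_{ij}$ in the prescribed order, this provides the required exchange.

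The only real subtlety is ensuring the divisor in the swap is strictly smaller than the multiplier in the given order; this is precisely why the ordering is arranged so that all $y_i$'s follow all $x_{ij}$'s, making the canonical move $x_{ij}\mapsto y_i$ always admissible. Everything else is essentially forced by the co-complex closure under supersets and the single-degree observation.
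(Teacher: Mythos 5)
Your proposal is correct and follows essentially the same route as the paper's proof: factor a generator of $J^s$ as $u_{F_1}\cdots u_{F_s}$, observe the first disagreement must occur at an $x$-variable, enlarge some $F_\ell$ not containing $x_{ij}$ using the co-complex closure under supersets, and perform the exchange $x_{ij}\mapsto y_i$. Your explicit justifications that the first disagreement cannot occur at a $y$-variable and that the resulting monomial is a minimal generator (via the single-degree observation) are details the paper leaves implicit.
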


\begin{proof} Let $\alpha,\beta$ be the minimal monomial generators of $J^s$ with $\alpha\neq \beta$. We may write $$\alpha=u_{F_1}u_{F_2}\cdots u_{F_s}=u_1\cdots u_ty_1^{sk_1-\deg(u_1)}\cdots y_t^{sk_t-\deg(u_t)},$$ and $$\beta=u_{G_1}u_{G_2}\cdots u_{G_s}=v_1\cdots v_ty_1^{sk_1-\deg(v_1)}\cdots y_t^{sk_t-\deg(v_t)}.$$

Here, $F_i\in \nabla, G_i\in \nabla$ for all $i\in [s]$, and $\mathrm{supp}(u_i)\cup \mathrm{supp}(v_i)\subseteq V_i$ for $i=1,\ldots,t$. Since $\alpha\neq \beta$, there exists a variable $z<y_1$ such that $\deg_{w}(\alpha)=\deg_{w}(\beta)$ for all $w<z$ and $\deg_{z}(\alpha)<\deg_{z}(\beta)$. Suppose that $z=x_{ij}$, where $1\leq i\leq t$ and $1\leq j\leq k_i$. Then $\deg_{x_{ij}}\alpha\leq s-1$  and so there exists $l\in [s]$,  such that $x_{ij}\notin F_l\cap V_i$. Say $l=1$. Put $F_1':=F_1\cup \{x_{ij}\}$ and let $$\gamma:=u_{F_1'}u_{F_2}\cdots u_{F_s}.$$
Then $\gamma=x_{ij}\alpha/ y_i$ and $\gamma\in J^s$.  From this it follows that $J^s$  is weakly polymatroidal.\end{proof}

\begin{Theorem} \label{main3} Let $W=G^{\pi}$,  where $G$ is a simple graph and $\pi=(W_1,\ldots,W_t)$ is a clique vertex-partition of $G$. Then $J(W)^{s}$ is weakly polymatroidal for every $s\geq 1$.
\end{Theorem}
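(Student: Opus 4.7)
The plan is to realize $J(W)$ as the face ideal of an explicit simplicial co-complex with respect to $\pi$, so that Proposition~\ref{w1} applies directly.

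First I would describe the minimal vertex covers of $W=G^\pi$ by analyzing each clique locally. For each $W_i$, a vertex cover $F$ of $W$ is forced to either contain $W_i$ entirely (in which case $y_i$ is superfluous) or to omit a single vertex $v_i^*\in W_i$ (in which case $y_i\in F$ to cover the whisker edge $v_i^*y_i$). Encoding these choices as a tuple $(v_1^*,\dots,v_t^*)$ with $v_i^*\in W_i\cup\{\bot\}$, one checks that such a tuple produces a (necessarily minimal) vertex cover of $W$ precisely when $S:=\{v_i^*:v_i^*\neq\bot\}$ is an independent set of $G$: the clique edges inside each $W_i$ are covered automatically, while a cross-edge $uv$ with $u\in W_i,v\in W_j,i\neq j$ is uncovered iff $u=v_i^*$ and $v=v_j^*$. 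Thus the minimal vertex covers of $W$ correspond bijectively to independent sets $S$ of $G$, with associated monomial generator
$$u_S=\prod_{v\in V(G)\setminus S}x_v\cdot\prod_{i:\,S\cap W_i\neq\emptyset}y_i.$$

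Next I would set $\nabla:=\{F\subseteq V(G):F\text{ is a vertex cover of }G\}$; this is upward closed, hence a simplicial co-complex on $V(G)$. Take the partition $V(G)=W_1\sqcup\cdots\sqcup W_t$ in the sense of Definition~\ref{4.1}, with $V_i=W_i$. For $F\in\nabla$ and $S:=V(G)\setminus F$, the clique condition gives $|S\cap W_i|\in\{0,1\}$, so
$$u_F=\prod_{v\in F}x_v\cdot\prod_{i=1}^t y_i^{k_i-|F\cap W_i|}=\prod_{v\notin S}x_v\cdot\prod_{i:\,S\cap W_i\neq\emptyset}y_i=u_S.$$
As $F$ ranges over $\nabla$, $S$ ranges over all independent sets of $G$, so $\{u_F:F\in\nabla\}$ coincides with the set of generators of $J(W)$ described above. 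A comparison of $x$- and $y$-exponents shows that these monomials are pairwise incomparable under divisibility, hence they are exactly the minimal generators. Therefore $J(W)$ is the face ideal of $\nabla$ with respect to $\pi$, and Proposition~\ref{w1} yields that $J(W)^s$ is weakly polymatroidal for every $s\geq 1$ in the ordering $x_{11}>\cdots>x_{1k_1}>\cdots>x_{t1}>\cdots>x_{tk_t}>y_1>\cdots>y_t$.

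The main obstacle, I expect, is getting the combinatorial parametrization right in the first step: one must not conflate minimal vertex covers of $W$ with those arising from minimal vertex covers of $G$. The whisker edge $vy_i$ forces the inclusion of either $v$ or $y_i$, and this rigidity is what makes every independent set $S$ of $G$, not just a maximal one, produce a distinct minimal cover of $W$. Once this parametrization is in place, the identification with a face ideal in the sense of Definition~\ref{4.1} is routine, and the theorem follows immediately from Proposition~\ref{w1}.
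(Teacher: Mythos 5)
Your proposal is correct and follows essentially the same route as the paper: both identify $J(W)$ as the face ideal (in the sense of Definition~\ref{4.1}) of the simplicial co-complex of vertex covers of $G$ (equivalently, of traces $C\cap V(G)$ of minimal vertex covers $C$ of $W$) with respect to the partition $\pi$, and then invoke Proposition~\ref{w1}. You simply supply the combinatorial verification -- the bijection between minimal vertex covers of $W$ and independent sets of $G$, and the matching of generators -- that the paper asserts without proof.
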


\begin{proof}  Let $\nabla:
=\{C\cap V(G)\:\;  C\mbox{ is a minimal vertex cover of } W\}.$  Then $\nabla$ is a simplicial co-complex on $V(G)$. Moreover, $J(W)$ coincides with the face ideal of $\nabla$ with respect to the partition $\pi$. Now, the result follows from Proposition~\ref{w1}.
\end{proof}

  Theorem~\ref{main3} together with Theorem~\ref{main2}  implies $G^\pi$ is vertex decomposable.   This recovers \cite[Theorem 3.3]{CN}.  Other consequences of Theorem~\ref{main3} are as follows:

\begin{Corollary} Let $W=G^{\pi}$ be as in Theorem~\ref{main2}. Then $\reg(J(W)^{(s)})=\reg(J(W)^{s})=s|V(G)|$  for all $s\geq 1$.
\end{Corollary}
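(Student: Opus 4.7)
The plan is to prove the two equalities $\reg(J(W)^s) = sn = \reg(J(W)^{(s)})$, with $n := |V(G)|$, using Theorem~\ref{main3} for the ordinary power and reducing the symbolic power to the ordinary one via Proposition~\ref{F1}. First, by the face-ideal description of Definition~\ref{4.1}, every minimal generator $u_F$ of $J(W)$ has degree
$$|F| + \sum_{i=1}^{t}(k_i - |F\cap W_i|) = |F| + n - |F| = n,$$
so $J(W)$ is equigenerated in degree $n$ and hence $J(W)^s$ is equigenerated in degree $sn$. Theorem~\ref{main3} shows that $J(W)^s$ is weakly polymatroidal and hence has linear quotients; an equigenerated ideal with linear quotients has a linear resolution, giving $\reg(J(W)^s) = sn$.

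For the symbolic power, the lower bound $\reg(J(W)^{(s)}) \geq sn$ is immediate from the Seyed Fakhari inequality $\reg(J(W)^{(s)}) \geq s \cdot \mathrm{Deg}(J(W)) = sn$ cited in the introduction. More concretely, $V(G)$ is itself a minimal vertex cover of $W$ (removing any $v \in W_i$ leaves the whisker edge $\{v,y_i\}$ uncovered), so $\bigl(\prod_{v\in V(G)} v\bigr)^{s}$ lies in $J(W)^{s} \subseteq J(W)^{(s)}$; it is in fact a minimal generator of $J(W)^{(s)}$, because any divisor supported on $V(G)$ must, by the whisker-edge constraint $a_v + b_i \geq s$ forced at $b_i = 0$, contain $v^s$ for every $v$. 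Hence $\mathrm{Deg}(J(W)^{(s)}) \geq sn$ and $\reg(J(W)^{(s)}) \geq sn$.

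For the matching upper bound, the plan is to apply Proposition~\ref{F1}(2) to $I = J(W)^{(s)}$ with $t = sn$: since $t \leq \mathrm{Deg}(I)$, one obtains
$$\reg(R/J(W)^{(s)}) = \reg\bigl(R/(J(W)^{(s)} \cap \mathfrak{m}^{sn})\bigr).$$
The truncation $J(W)^{(s)} \cap \mathfrak{m}^{sn}$ is equigenerated in degree $sn$: in any minimal generator $m = \prod x_{ij}^{a_{ij}} \prod y_i^{b_i}$ of $J(W)^{(s)}$ the exponents $a_{ij}$ and $b_i$ are at most $s$ (from tightness of some edge constraint), and a block-by-block count shows $\sum_j a_{ij} + b_i \leq sk_i$, summing to degree $\leq sn$, so any element of degree $> sn$ is non-minimal and admits a variable $x\mid m$ with $m/x \in J(W)^{(s)} \cap \mathfrak{m}^{sn}$. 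The final step is to show this equigenerated truncation is weakly polymatroidal in the ordering of Proposition~\ref{w1}, by adapting the block-wise exchange $x_{ij}\leftarrow y_i$ there to the symbolic constraints $a_{ij} + a_{i'j'} \geq s$ and $a_{ij}+b_i \geq s$; once established, this yields a linear resolution and so regularity $sn$. The main obstacle is this final exchange argument, since the minimal generators of $J(W)^{(s)}$ need not be products of face-ideal generators, and so the exchange must be justified purely from the edge-wise constraints together with the cross-block interactions inherited from $G$.
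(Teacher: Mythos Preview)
Your treatment of the ordinary power is correct and coincides with the paper's: Theorem~\ref{main3} gives that $J(W)^s$ is weakly polymatroidal, hence has linear quotients, and since it is equigenerated in degree $s|V(G)|$ this yields $\reg(J(W)^s)=s|V(G)|$.

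For the symbolic power the paper does something entirely different and much shorter: it simply invokes \cite[Corollary~4.4]{Sel}, which already computes $\reg(J(G^\pi)^{(s)})=s|V(G)|$ for any clique vertex-partition $\pi$. No truncation, no Proposition~\ref{F1}, no exchange argument is needed.

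Your alternative route has a genuine gap, and it is precisely the one you flag yourself. The preliminary steps are fine: your block count $\sum_j a_{ij}+b_i\le sk_i$ is correct (minimality forces $a_{ij},b_i\le s$ and $b_i=\max(0,s-\min_j a_{ij})$, from which the inequality follows), so $\mathrm{Deg}(J(W)^{(s)})=sn$ and Proposition~\ref{F1}(2) applies. But the claim that $(J(W)^{(s)})_{\ge sn}$ is weakly polymatroidal is left unproven, and it is not a routine adaptation of Proposition~\ref{w1}. The difficulty is real: already for $G$ a triangle with the trivial partition one has $x_1x_2x_3y_1y_2y_3\in J(W)^{(2)}\setminus J(W)^2$ in degree $2n$, so the degree-$sn$ piece of the symbolic power is strictly larger than that of the ordinary power and its generators are not products of face-ideal generators. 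Any exchange $x_{ij}u/w$ must then be checked against \emph{all} edge constraints of $W$, including the cross-block edges coming from $G$, and your block-wise argument does not address those. Closing this gap would amount to proving a new structural result about $J(G^\pi)^{(s)}$ that is at least as strong as the corollary you are trying to establish; the paper avoids this by citing \cite{Sel}.
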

\begin{proof} It follows from Theorem~\ref{main3} that $\reg(J(W)^{s})=s|V(G)|$.  By \cite[Corollary 4.4]{Sel}, we have $\reg(J(W)^{(s)})=s|V(G)|$.
\end{proof}

\begin{Corollary} \label{main3c} Let $W$ be the whisker graph of some graph. Then both $J(W)^{(s)}$ and $J(W)^s$ are weakly polymatroidal for all $s\geq 1$.
\end{Corollary}
\begin{proof} We have that $J(W)^s$ is weakly polymatroidal by Theorem~\ref{main3} and that $J(W)^{(s)}$ is  weakly polymatroidal by Corollary~\ref{3.7}.
\end{proof}

\vspace{5mm}
{\bf \noindent Acknowledgment:}
This research is supported by NSFC (No. 11971338). We would like to express our sincere thanks to  the referee, who reads the paper carefully and helps us kill many minor errors of the original paper.

\end{document}